\documentclass{amsart}%

\usepackage{graphicx}
\usepackage{multicol,multirow}
\usepackage{amsmath,amssymb,amsfonts}
\usepackage{mathrsfs}
\usepackage{rotating}
\usepackage{appendix}
\usepackage[numbers]{natbib}
\usepackage{xcolor}
\usepackage{url}

\usepackage{tikz}
\usepackage{subfigure}

\newtheorem{theorem}{Theorem}[section]
\newtheorem{lemma}[theorem]{Lemma}
\newtheorem{pro}[theorem]{Proposition}
\newtheorem{co}[theorem]{Corollary}
\newtheorem{Que}[theorem]{Question}
\theoremstyle{definition}
\newtheorem{definition}{Definition}[section]
\theoremstyle{remark}
\newtheorem{remark}[theorem]{Remark}
\newtheorem{example}[theorem]{Example}

\newcommand{\ga}{\big|\!\big|}
\newcommand{\red}{\textcolor{red}}
\newcommand{\blue}{\textcolor{blue}}

\numberwithin{equation}{section}

\title[A Hopf Algebra on Permutations]{A Hopf algebra on permutations with a coupling product\thanks{H. Li and Y. Yang are partially supported by the National Natural Science Foundation of China (No. 11701339). N. Bergeron is partially supported by NSERC Discovery.}}

\author{Huilan Li, Yishuo Yang, Nantel Bergeron}

\address{School of Mathematics and Statistics, Shandong Normal University, Jinan, Shandong, 250358, P. R. China}
\email{lihl@sdnu.edu.cn}
\email{1783767117@qq.com}
\address{Department of Mathematics and Statistics, York University, Toronto, ON, M3J IP3}
\email{bergeron@yorku.ca}

\subjclass{05E05, 05E99 and 16T30 }

\keywords{permutation; coupling product; draw coproduct; graded Hopf algebra; monomial basis}

\begin{document}

\begin{abstract}
We define a coupling product and a draw coproduct on permutations and show that they define a graded, connected, cocommutative, free Hopf algebra $\mathbb{KS}$. In characteristic zero,
    this implies that $\mathbb{KS}$ is isomorphic to a certain cocommutative Hopf algebra associated with the dual of the coradical filtration of the Malvenuto--Reutenauer Hopf algebra on permutations. 
    The coupling product and draw coproduct are permutation analogues of the product and coproduct on the monomial basis of symmetric functions;
    therefore, we can say that our presentation is a monomial basis for $\mathbb{KS}$.
\end{abstract}

\maketitle

%%%%%%%%%%%%%%%%%%%%%%%%%%%-1%%%%%%%%%%%%%%%%%%%%%%%
\section{Introduction}
Hopf algebras were first introduced by Hopf in the 1940s~\cite{r1}. They have many interesting properties, such as freeness, cofreeness, commutativity, and cocommutativity~\cite{r2,r3}. Over time, mathematicians have found many Hopf algebras on different objects, such as permutations~\cite{r4,r5,r18}, planar trees~\cite{GL89,r6}, simple graphs~\cite{r7}, $(0,1)$-matrices~\cite{r14}, posets~\cite{r8} and parking functions~\cite{r9,r10,r11}. In the past two decades, Hopf algebras have been applied in many areas, such as Lie algebras~\cite{r12,r13} and quantum groups~\cite{r17}.

Denote by $S_n$ the symmetric group of degree $n$, which contains all permutations of $[n]:=\{1,2,\ldots,n\}$. Let $\mathbb{K}S_n$ be the vector space with basis $S_n$ over the field $\mathbb{K}$. Define $\mathbb{KS}:=\bigoplus_{n \geq 0}\mathbb{K}S_n$, where $S_0 = \{\epsilon\}$ and $\epsilon$ is the empty permutation. Then $\mathbb{KS}$ is graded, and its $n$-th component is $\mathbb{K}S_n$. In 1995, Malvenuto and Reutenauer~\cite{r18} constructed the classical Hopf algebra on permutations, where the multiplication is the shuffle product and the comultiplication is the deconcatenation coproduct. 
In 2020, Zhao and Li~\cite{r20} defined a new shuffle product and deconcatenation coproduct on permutations and proved that the vector space spanned by permutations with this new structure is also a graded, connected Hopf algebra. They then constructed the dual Hopf algebra on permutations and found closed formulas for antipodes. In 2021, Liu and Li~\cite{r21} proved that the super-shuffle product and the cut-box coproduct on $\mathbb{KS}$ also form a graded, connected Hopf algebra.

In 2005, Aguiar and Sottile~\cite{r5} constructed a natural graded, connected, cocommutative, and free Hopf algebra corresponding to the Malvenuto--Reutenauer Hopf algebra on permutations via a coradical filtration.
They then showed that it is isomorphic to the Hopf algebra of heap--ordered trees of Grossman--Larson~\cite{GL89}. Here, {\sl free Hopf algebra} refer to the freeness of the algebra side only as we often say cofree for the coalgebra side.
Our main result is the construction of new structures on permutations, namely, a coupling product and a draw coproduct, and the proof of the following statement.

\medskip
\noindent{\bf Main Theorem.} {\it $\mathbb{KS}$ equipped with the coupling product and the draw coproduct is a graded, connected, cocommutative, and free Hopf algebra.}
\medskip

For $\mathrm{Char}(\mathbb{K})=0$, we obtain that $\mathbb{KS}$ is also isomorphic to the cocommutative Hopf algebra of Aguiar and Sottile~\cite{r5} and the Hopf algebra of heap--ordered trees of Grossman--Larson~\cite{GL89}.
The coupling product and draw coproduct on the permutation basis are analogues of the product and coproduct on the monomial basis of symmetric functions. Therefore, we can view our construction as a monomial basis for these Hopf algebras.

The organization of this paper is as follows. In Section 2, we provide the basic definitions of Hopf algebras and review the definitions and notation for permutations. Then we introduce the definitions of absolute ascents and atoms of permutations. 
In Section 3, we introduce the standard form of a sequence of distinct positive integers, define the draw coproduct $\bigtriangleup$ on $\mathbb{KS}$, and prove that $(\mathbb{KS},\bigtriangleup,\nu)$ is a graded coalgebra. 
In Section 4,  we define the coupling product $\ltimes$ on permutations and prove that $(\mathbb{KS},\ltimes^{\uparrow},\mu)$ is a graded algebra. 
In Section 5, we prove that the product $\ltimes$ and the coproduct $\bigtriangleup$ are compatible, so that $(\mathbb{KS}, \ltimes^{\uparrow},\mu, \bigtriangleup ,\nu)$ is a graded, connected bialgebra and hence a Hopf algebra. In Section~6, we show that $\mathbb{KS}$ is free and relate it to other well-known Hopf algebras. 
We end in Section 7 by explaining the analogy between our product and coproduct and the structure of the monomial basis of symmetric functions, and conclude with some open questions.

\noindent{\sc{Acknowledgement}}: We thank the referee for the suggestions that improved our presentation.

%%%%%%%%%%%%%%%%%%%-2-%%%%%%%%%%%%%%%%%%%%%%%%%%%%
\section[Preliminaries]{Preliminaries}
\subsection{Basic Definitions}
We recall some basic definitions of Hopf algebras. Let $R$ be a commutative ring and $A$ be an $R$-module.

A \emph{product} $m\colon A\otimes_R A\longrightarrow A$ and a \emph{unit} $\mu\colon R\longrightarrow A$ satisfying the commutative diagrams in Figure~\ref{fig1} make $(A,m,\mu)$ an $R$-\emph{algebra}. The algebra $A$ is \emph{graded} if there is a direct sum decomposition $A=\bigoplus_{i\geq 0}A_i$, the product satisfies $m(A_p\otimes A_q)\subseteq A_{p+q}$, and $\mu(R)\subseteq A_0$.

\begin{figure}
\begin{minipage}[t]{0.45\textwidth}
\begin{center}
\usetikzlibrary{shapes,arrows}
\tikzstyle{arrow} = [thick,->,>=stealth]
\begin{tikzpicture}[node distance=2cm]
\node (A) at(0,2) [] {$A\otimes A\otimes A$};
\node (B) at(3,2) [] {$A\otimes A$};
\node (C) at(0,0) [] {$A\otimes A$};
\node (D) at(3,0) [] {$A$};
\draw [arrow] (A) -- (B)node[midway, above] {$m\otimes {\rm{id}}$};
\draw [arrow] (C) -- (D)node[midway, above] {$m$};
\draw [arrow] (A) -- (C)node[midway, left] {${\rm{id}}\otimes m$};
\draw [arrow] (B) -- (D)node[midway, right] {$m$};
\end{tikzpicture}
\end{center}
\end{minipage}
\hfill
\begin{minipage}[t]{0.45\textwidth}
\begin{center}
\usetikzlibrary{shapes,arrows}
\tikzstyle{arrow} = [thick,->,>=stealth]
\begin{tikzpicture}[node distance=2cm]
\node (A) at(2,2) [] {$A\otimes A$};
\node (B) at(0,0) [] {$A\otimes R$};
\node (C) at(2,0) [] {$A$};
\node (D) at(4,0) [] {$R\otimes A$};
\draw [arrow] (D) -- (C)node[midway, above] {$\cong$};
\draw [arrow] (B) -- (C)node[midway, above] {$\cong$};
\draw [arrow] (A) -- (C)node[midway, right] {$m$};
\draw [arrow] (B) -- (A)node[midway, left] {${\rm{id}}\otimes \mu$};
\draw [arrow] (D) -- (A)node[midway, right] {$\mu\otimes {\rm{id}}$};
\end{tikzpicture}
\end{center}
\end{minipage}
\caption{Associative law and unit\label{fig1}}
\end{figure}

A \emph{coproduct} $\bigtriangleup\colon A\longrightarrow A\otimes_R A$ and a \emph{counit} $\nu\colon A\longrightarrow R$ satisfying the commutative diagrams in Figure~\ref{fig2} make $(A,\bigtriangleup,\nu)$ an $R$-\emph{coalgebra}. The coalgebra $A$ is \emph{graded} if there is a direct sum decomposition $A=\bigoplus_{i\geq 0}A_i$ such that $\bigtriangleup(A_n)\subseteq\bigoplus(A_r\otimes A_{n-r})$ and $\nu(A_n)=0$ for $n\geq1$.

\begin{figure}[h]
\begin{minipage}[t]{0.45\textwidth}
\begin{center}
\usetikzlibrary{shapes,arrows}
\tikzstyle{arrow} = [thick,->,>=stealth]
\begin{tikzpicture}[node distance=2cm]
\node (A) at(0,2) [] {$A$};
\node (B) at(3,2) [] {$A\otimes A$};
\node (C) at(0,0) [] {$A\otimes A$};
\node (D) at(3,0) [] {$A\otimes A\otimes A$};
\draw [arrow] (A) -- (B)node[midway, above] {$\bigtriangleup$};
\draw [arrow] (C) -- (D)node[midway, above] {$\bigtriangleup\otimes {\rm{id}}$};
\draw [arrow] (A) -- (C)node[midway, left] {$\bigtriangleup$};
\draw [arrow] (B) -- (D)node[midway, right] {${\rm{id}}\otimes\bigtriangleup$};
\end{tikzpicture}
\end{center}
\end{minipage}
\hfill
\begin{minipage}[t]{0.45\textwidth}
\begin{center}
\usetikzlibrary{shapes,arrows}
\tikzstyle{arrow} = [thick,->,>=stealth]
\begin{tikzpicture}[node distance=1cm]
\node (A) at(2,2) [] {$A$};
\node (B) at(0,0) [] {$R\otimes A$};
\node (C) at(2,0) [] {$A\otimes A$};
\node (D) at(4,0) [] {$A\otimes R$};
\draw [arrow] (C) -- (D)node[midway, above] {$\text{id}\otimes\nu$};
\draw [arrow] (C) -- (B)node[midway, above] {$\nu\otimes \text{id}$};
\draw [arrow] (A) -- (C)node[midway, right] {$\bigtriangleup$};
\draw [arrow] (A) -- (B)node[midway, left] {$\cong$};
\draw [arrow] (A) -- (D)node[midway, right] {$\cong$};
\end{tikzpicture}
\end{center}
\end{minipage}
\caption{Coassociative law and counit\label{fig2}}
\end{figure}

Suppose $A$ is both an $R$-algebra and an $R$-coalgebra. If $\bigtriangleup$ and $\nu$ are algebra homomorphisms (equivalently, $m$ and $\mu$ are coalgebra homomorphisms), then we say the algebra and coalgebra structures on $A$ are \emph{compatible} and $(A, m, \mu, \bigtriangleup, \nu)$ is an $R$-\emph{bialgebra}. If $A=\bigoplus_{i\geq 0}A_i$ and the operations $m,\bigtriangleup,\mu,\nu$ preserve the grading as above, then $A$ is a \emph{graded $R$-bialgebra}. Finally, if $A_0\cong R$ (we say $A$ is \emph{connected}), then by Takeuchi's formula~\cite{Takeuchi} there exists a unique antipode $S\colon A\to A$, and $(A,m,\mu,\bigtriangleup,\nu,S)$ is a \emph{graded Hopf algebra}.

\subsection{Basic Notation}
With the notation $[n]:= \{1, 2, \ldots, n\}$, the set $[n]$ is naturally ordered with $1<2<\cdots<n$ and plays a role throughout this paper.
The symmetric group $S_n$ is the set of permutations, that is, the bijections $a\colon [n]\to [n]$. We represent $a$ by listing its values as $a = a_1a_2\cdots a_n$, where $a_i = a(i) \in [n]$. In particular, $S_0 = \{\epsilon\}$, where $\epsilon$ is the empty permutation. We denote $\mathbb{S}:=\biguplus_{n\geq 0}S_n$ and $\mathbb{KS}:=\bigoplus_{n \geq 0}\mathbb{K}S_n$, where $\mathbb{K}S_n$ is the vector space with basis $S_n$ over the field $\mathbb{K}$.

Let $a = a_1a_2\cdots a_n$ be any sequence of $n$ distinct positive integers. A permutation $a\in S_n$ is a special case of such a sequence.
For $1\leq i\leq n-1$, we say that $i$ is an \emph{absolute ascent} of $a$ if $a_i<a_j$ for any $i<j\leq n$. 
Note that absolute ascents are exactly the positions $i$ such that $a_i$ is smaller than all entries to its right.
Let ${\mathcal A}(a):=\{i_1,i_2,\ldots,i_{r-1}\}$ be the set of all absolute ascents of $a$, where $i_1<i_2<\cdots <i_{r-1}$. Denote
\begin{center}
  $\alpha_1=a_1a_2\cdots a_{i_1},\qquad \alpha_2=a_{i_1+1}a_{i_1+2}\cdots a_{i_2},\qquad\ldots\qquad\alpha_r=a_{i_{r-1}+1}\cdots a_n,$
\end{center}
and call $\alpha_i$ an \emph{atom} of $a$, for $1 \leq i \leq r$. 
Thus, the atoms form a canonical decomposition of $a$ into contiguous blocks determined by its absolute ascents.
To emphasize the atoms, we place a vertical bar `$|$' between the atoms of $a$ and write $a=\alpha_1|\alpha_2|\cdots|\alpha_r$; this is called the \emph{factorization} of $a$. If $a$ has no absolute ascent, we call it \emph{irreducible}.

\begin{example}
For the permutation $a=3214657$, the absolute ascents are $\{3,4,6\}$ and
$a=321|4|65|7$. The permutation $231$ is irreducible.
\end{example}

%%%%%%%%%%%%%%%-3-%%%%%%%%%%%%%
\section{Draw coproduct on permutations}

The total order on the positive integers ${\mathbb Z}_{>0}$ induces a total order on any finite subset $A\subset {\mathbb Z}_{>0}$. Let $n=|A|$. We define ${\rm st}_A\colon A\to [n]$ to be the {\bf unique} order-preserving bijection. We can extend the definition of ${\rm st}_A$ to any sequence $b=b_1b_2\ldots b_\ell$ of distinct positive integers such that $\{b_1,b_2,\ldots,b_\ell\}\subseteq A$, 
by setting ${\rm st}_A(b)={\rm st}_A(b_1){\rm st}_A(b_2)\cdots {\rm st}_A(b_\ell)$. 
In particular, for the empty sequence, ${{\rm st}_A}(\epsilon)=\epsilon$.
We define ${\rm Set}(b)=\{b_1,b_2,\ldots,b_\ell\}$ and note that ${{\rm st}}_{{\rm Set}(b)}(b)$
is always a permutation in $S_\ell$. We define ${\rm st}(b) =  {{\rm st}}_{{\rm Set}(b)}(b)$, which is always a permutation.

\begin{lemma} \label{lem:stA}
Let $a=a_1a_2\cdots a_n$ be a sequence of distinct positive integers, and let $A={\rm Set}(a)$.
If $a=\alpha_1|\alpha_2|\cdots |\alpha_r$, then ${\rm st}_A(a)={\rm st}_A(\alpha_1)|{\rm st}_A(\alpha_2)|\cdots |{\rm st}_A(\alpha_r)$.
\end{lemma}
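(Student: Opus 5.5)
The plan is to observe that the factorization into atoms depends only on the relative order of the entries, and that ${\rm st}_A$ is order-preserving. Write $c={\rm st}_A(a)=c_1c_2\cdots c_n$, where $c_k={\rm st}_A(a_k)$. Since ${\rm st}_A$ is applied letter by letter, $c$ is the concatenation ${\rm st}_A(\alpha_1){\rm st}_A(\alpha_2)\cdots{\rm st}_A(\alpha_r)$ as a word, and each block ${\rm st}_A(\alpha_j)$ occupies exactly the same positions as $\alpha_j$. So the lemma reduces to showing that $\mathcal A(c)=\mathcal A(a)$: the vertical bars are then placed in the same positions in both words, and the claimed factorization follows.

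To compare the absolute ascents, fix $1\le i\le n-1$. By definition, $i\in\mathcal A(a)$ if and only if $a_i<a_j$ for all $i<j\le n$. Because ${\rm st}_A\colon A\to[n]$ is a strictly order-preserving bijection, we have $a_i<a_j$ if and only if ${\rm st}_A(a_i)<{\rm st}_A(a_j)$, that is, $c_i<c_j$. Hence $i\in\mathcal A(a)$ if and only if $c_i<c_j$ for all $j>i$, which says exactly that $i\in\mathcal A(c)$. This gives $\mathcal A(c)=\mathcal A(a)=\{i_1<\cdots<i_{r-1}\}$.

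With the absolute ascents identified, the atoms of $c$ are the segments $c_{i_{j-1}+1}\cdots c_{i_j}$, with the conventions $i_0=0$ and $i_r=n$. By the letter-wise definition of ${\rm st}_A$, each such segment equals ${\rm st}_A(\alpha_j)$. Therefore $c={\rm st}_A(\alpha_1)|\cdots|{\rm st}_A(\alpha_r)$.

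There is no real obstacle here. The only point needing care is that we use ${\rm st}_A$ with the global set $A={\rm Set}(a)$, and not ${\rm st}$ applied to each atom separately, so that the letter-wise images genuinely concatenate to ${\rm st}_A(a)$. The case where $a$ is irreducible ($r=1$) is covered by the same argument, since then both ascent sets are empty.
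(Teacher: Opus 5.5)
Your proposal is correct and takes the same approach as the paper. The paper's one-line proof says that ${\rm st}_A$ preserves the relative order of the entries of $a$ and hence the positions of absolute ascents; you spell out this step in detail, together with the letter-wise concatenation it relies on.
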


\begin{proof}
Since the map ${{\rm st}_A}$ preserves the relative order of the entries of $a$, it preserves the positions of absolute ascents.
\end{proof}

Note that in Lemma \ref{lem:stA}, ${\rm st}_A(a)={\rm st}(a)$ is a permutation, but in general, the atoms ${\rm st}_A(\alpha_i)\ne {\rm st}(\alpha_i)$.
It is thus important to be careful with the use of the notation ${\rm st}$ without an index set.

\begin{example}
For $a=423879$, we have $A=\{2,3,4,7,8,9\}$ and 
${{\rm st}_A}(423879)=312546$ is a permutation of degree 6. Moreover, $a=42|3|87|9$ and ${\rm st}_A(a)=31|2|54|6$. Here, ${\rm st}_A(42)=31\ne 21={\rm st}(42)$.
\end{example}

For $a=\alpha_1|\alpha_2|\cdots |\alpha_r$ and $K=\{i_1,i_2,\ldots,i_\ell\}\subseteq [r]$ with $i_1<i_2<\cdots<i_\ell$, we define $\alpha_K=\alpha_{i_1}|\alpha_{i_2}|\cdots |\alpha_{i_\ell}$, which is a factorization into atoms.
\begin{example}
For $a=\alpha_1|\alpha_2|\alpha_3 |\alpha_4=42|3|87|9$ and $K=\{1,4\}\subseteq [4]$, we have $\alpha_{K}=\alpha_1|\alpha_4=42|9$. 
\end{example}
\begin{pro}\label{pro:st}
 Let $a=\alpha_1|\alpha_2|\cdots |\alpha_r$ be a permutation. If $L\subseteq K\subseteq [r]$, then
  $${{\rm st}}(\alpha_L)={{\rm st}}({{\rm st}}({\alpha_K)}_{{\rm st}_K(L)}).$$
\end{pro}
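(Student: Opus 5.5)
The plan is to reduce both sides of the identity to the standardization of one and the same sequence, $\alpha_L$, using that ${\rm st}_B$ is order-preserving for $B={\rm Set}(\alpha_K)$.

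First I will justify that $\alpha_K=\alpha_{i_1}|\alpha_{i_2}|\cdots|\alpha_{i_\ell}$ really is the factorization of the sequence $\alpha_K$ into its atoms, since the paper only asserts this. This is the main obstacle. Two facts need checking.
\begin{itemize}
\item The last position of each $\alpha_{i_j}$, other than the final block, is an absolute ascent of $\alpha_K$. This is immediate: in $a$ that entry is smaller than every entry to its right, and the entries to its right in $\alpha_K$ form a subset of those.
\item No position strictly inside an atom $\alpha_i$ becomes an absolute ascent of $\alpha_K$.
\end{itemize}
For the second fact, let $p$ be a non-final position of $\alpha_i$ and let $e$ be the last position of $\alpha_i$. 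Since $p$ is not an absolute ascent of $a$, some entry to the right of $p$ is smaller than $a_p$. Let $m>p$ be the position of the minimum entry to the right of $p$, so $a_m<a_p$.
\begin{itemize}
\item If $m>e$, then $a_m$ lies to the right of $e$ and $a_m\le a_e$, contradicting that $e$ is an absolute ascent (or, when $\alpha_i$ is the last atom, there is nothing to the right of $e$).
\item Hence $m\le e$, so the witness $a_m<a_p$ lies inside $\alpha_i$ itself.
\end{itemize}
Because the witness is internal to the atom, it survives in $\alpha_K$, and $p$ remains a non-ascent there.

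Next, with $B={\rm Set}(\alpha_K)$, Lemma~\ref{lem:stA} (applied to the sequence $\alpha_K$) gives
$${\rm st}(\alpha_K)={\rm st}_B(\alpha_{i_1})|{\rm st}_B(\alpha_{i_2})|\cdots|{\rm st}_B(\alpha_{i_\ell}).$$
The atoms of the permutation ${\rm st}(\alpha_K)$ are thus indexed by $[\ell]$, and the atom in position $j$ comes from $\alpha_{i_j}$, where $j={\rm st}_K(i_j)$. Selecting the index set ${\rm st}_K(L)$ therefore gives
$${\rm st}(\alpha_K)_{{\rm st}_K(L)}={\rm st}_B(\alpha_L),$$
where ${\rm st}_B$ is applied entrywise to the concatenation $\alpha_L$.

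Finally, ${\rm st}_B$ restricted to ${\rm Set}(\alpha_L)\subseteq B$ is an order-preserving injection. The standardization of a sequence depends only on the relative order of its entries, so ${\rm st}({\rm st}_B(\alpha_L))={\rm st}(\alpha_L)$, which is the claimed identity.
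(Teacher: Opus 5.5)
Your proof is correct and follows essentially the same route as the paper: apply Lemma~\ref{lem:stA} to $\alpha_K$ with $B={\rm Set}(\alpha_K)$, select the atoms indexed by ${\rm st}_K(L)$ to obtain ${\rm st}_B(\alpha_L)$, and conclude because standardization is invariant under the order-preserving map ${\rm st}_B$. Your extra check that $\alpha_{i_1}|\cdots|\alpha_{i_\ell}$ really is the atom factorization of $\alpha_K$ is sound (in particular, the witness to a non-ascent inside an atom lies within that same atom), and it supplies a point the paper only asserts when it defines $\alpha_K$.
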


\begin{proof}
Let $K=\{k_1,k_2,\ldots,k_\ell\}$ and $L=\{k_{i_1},k_{i_2},\ldots,k_{i_s}\}$. We assume that $k_1<k_2<\cdots<k_\ell$ and $i_1<i_2<\cdots< i_s$.
Note that ${{\rm st}_K(L)}=\{i_1,i_2,\ldots, i_s\}$.
Let $A={\rm Set}(\alpha_K)$, so that, using Lemma~\ref{lem:stA} twice, we have
\begin{align*}
  {{\rm st}}( {{\rm st}}(\alpha_K)_{{\rm st}_K(L)} )=&\  {{\rm st}}\big({{\rm st}}_A(\alpha_K)_{\{i_1,i_2,\ldots, i_s\}})\\
   	 =&\  {{\rm st}}\Big(\big({{\rm st}}_A(\alpha_{k_1})\big|{{\rm st}}_A(\alpha_{k_2})\big|\cdots\big| {{\rm st}}_A(\alpha_{k_\ell})\big)_{\{i_1,i_2,\ldots, i_s\}}\Big)\\
   	 =&\  {{\rm st}}\big({{\rm st}}_A(\alpha_{k_{i_1}})|{{\rm st}}_A(\alpha_{k_{i_2}})|\cdots| {{\rm st}}_A(\alpha_{k_{i_s}})\big)\\
   	 =&\  {{\rm st}}({{\rm st}}_A(\alpha_{k_{i_1}}|\alpha_{k_{i_2}}|\cdots| \alpha_{k_{i_s}})) =  {{\rm st}}({{\rm st}}_A(\alpha_L)) =  {{\rm st}}(\alpha_L).
\end{align*}
\vskip-20pt
\end{proof}

\begin{example}
  Let $a=423879=42|3|87|9=\alpha_1|\alpha_2|\alpha_3|\alpha_4$ and $L=\{1,3\}\subseteq K=\{1,3,4\}\subseteq [4]$. Then
  ${{\rm st}}(\alpha_K)={{\rm st}}(42|87|9)=21|43|5, \ {\rm st}_K(L)=\{1,2\}$ and
   $${{\rm st}}({{\rm st}}(\alpha_K)_{{\rm st}_{K}(L)})={{\rm st}}((21|43|5)_{\{1,2\}})= {{\rm st}}(21|43) =2143= {{\rm st}}(42|87)=   {{\rm st}}(\alpha_L).$$
\end{example}

\begin{definition} \label{def:copro}
  \rm{The {\emph{draw coproduct}}  $\bigtriangleup$ on $\mathbb{KS}$ is defined on the basis of permutations by}
\begin{equation*}
    \bigtriangleup(a)=\sum_{K \subseteq[r]} {{\rm st}}(\alpha_{K}) \otimes {{\rm st}}(\alpha_{[r] \backslash K}),
\end{equation*}
where $a=\alpha_1|\alpha_2|\cdots |\alpha_r.$   The \emph{counit} $\nu\colon\mathbb{KS}\rightarrow \mathbb{K}$ is defined by
$$
  \nu(a)=\left\{
\begin{aligned}
  1 &,\quad a=\epsilon, \\
  0 &,\quad \text{otherwise}.
\end{aligned}
\right.
$$
\end{definition}

It is immediate that the coproduct $\bigtriangleup$ is cocommutative and $\bigtriangleup(\epsilon)=\epsilon\otimes\epsilon$.

\begin{example}
  \rm{For the permutation $a=31|2|4$, we have}
\begin{align*}
\bigtriangleup(31|2|4)=&\ \epsilon\otimes 31|2|4+21\otimes1|2+1\otimes21|3+1\otimes31|2+31|2\otimes1+21|3\otimes1\\
   &+1|2\otimes21+31|2|4\otimes \epsilon.
\end{align*}

\end{example}

\begin{theorem}
  $(\mathbb{KS},\bigtriangleup,\nu)$ is a graded coalgebra.
\end{theorem}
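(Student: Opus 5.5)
We verify the grading, the counit axioms, and coassociativity, in that order; the first two are immediate. For a permutation $a=\alpha_1|\cdots|\alpha_r\in S_n$ and $K\subseteq[r]$, the sizes of ${\rm st}(\alpha_K)$ and ${\rm st}(\alpha_{[r]\setminus K})$ add up to $n$. Hence $\bigtriangleup(\mathbb{K}S_n)\subseteq\bigoplus_{p+q=n}\mathbb{K}S_p\otimes\mathbb{K}S_q$. By definition $\nu$ vanishes on $S_n$ for $n\ge1$.

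For the counit, note that ${\rm st}(\alpha_K)=\epsilon$ exactly when $K=\emptyset$, since atoms are nonempty. Therefore $(\nu\otimes{\rm id})\bigtriangleup(a)={\rm st}(\alpha_{[r]})={\rm st}(a)=a$, because $a$ is already a permutation. The identity $({\rm id}\otimes\nu)\bigtriangleup(a)=a$ follows in the same way.

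For coassociativity, we first need to know the atoms of ${\rm st}(\alpha_K)$. The key claim is that the factorization of $\alpha_K$ is exactly $\alpha_{k_1}|\cdots|\alpha_{k_\ell}$; the paper already uses this implicitly when it calls $\alpha_K$ "a factorization into atoms". To check it, note first that every last position of some $\alpha_{k_j}$ with $j<\ell$ is an absolute ascent of $\alpha_K$. Indeed, its value was smaller than everything to its right in $a$, hence smaller than everything to its right in $\alpha_K$. Conversely, no interior position of an atom $\alpha_{k_j}$ is an absolute ascent of $\alpha_K$. Since $\alpha_{k_j}$ is an atom of $a$, an interior position $p$ of it is not an absolute ascent of $a$. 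So some later entry $a_q<a_p$ exists, and the main point is to show that $q$ can be taken inside $\alpha_{k_j}$. This holds because the last entry $m$ of $\alpha_{k_j}$ is smaller than everything after it, so any entry after the block is larger than $m$. If $a_p<m$, then every later entry outside the block would exceed $a_p$, so the witness $q$ must lie inside the block. If $a_p>m$, then $m$ itself is a witness inside the block. This internal witness survives in $\alpha_K$. By Lemma~\ref{lem:stA}, ${\rm st}(\alpha_K)={\rm st}_{A}(\alpha_{k_1})|\cdots|{\rm st}_A(\alpha_{k_\ell})$ with $A={\rm Set}(\alpha_K)$, so ${\rm st}(\alpha_K)$ has $\ell=|K|$ atoms indexed by ${\rm st}_K(K)=[\ell]$.

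With this, we compute
\begin{align*}
(\bigtriangleup\otimes{\rm id})\bigtriangleup(a)&=\sum_{K\subseteq[r]}\ \sum_{J\subseteq[|K|]}{\rm st}\big({\rm st}(\alpha_K)_J\big)\otimes{\rm st}\big({\rm st}(\alpha_K)_{[|K|]\setminus J}\big)\otimes{\rm st}(\alpha_{[r]\setminus K}).
\end{align*}
We reindex by $L=\{k_j: j\in J\}\subseteq K$, so that $J={\rm st}_K(L)$ and $[|K|]\setminus J={\rm st}_K(K\setminus L)$. Proposition~\ref{pro:st} then turns the terms into ${\rm st}(\alpha_L)\otimes{\rm st}(\alpha_{K\setminus L})\otimes{\rm st}(\alpha_{[r]\setminus K})$. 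The result is the sum over ordered set partitions $[r]=L\sqcup M\sqcup N$ of ${\rm st}(\alpha_L)\otimes{\rm st}(\alpha_M)\otimes{\rm st}(\alpha_N)$.

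The other side is handled symmetrically. In $({\rm id}\otimes\bigtriangleup)\bigtriangleup(a)$ we apply the same atom claim to $\alpha_{[r]\setminus K}$ and use Proposition~\ref{pro:st} with the pair of sets $[r]\setminus K$. This yields the same sum over triples $(L,M,N)$, so the two sides agree.

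The main obstacle is the atom claim for $\alpha_K$, in particular excluding new absolute ascents at interior positions. The argument above settles it via the internal-witness observation, and everything else reduces to Lemma~\ref{lem:stA} and Proposition~\ref{pro:st}.
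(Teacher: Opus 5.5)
Your proof is correct and follows the same route as the paper: you expand both iterated coproducts, reindex via ${\rm st}_K$, collapse the terms with Proposition~\ref{pro:st}, and match the two sums over triples of disjoint subsets covering $[r]$. The only differences are additions. You prove explicitly that $\alpha_K$ factors as $\alpha_{k_1}|\cdots|\alpha_{k_\ell}$ (so ${\rm st}(\alpha_K)$ has $|K|$ atoms), which the paper uses tacitly both here and in the proof of Proposition~\ref{pro:st}, and you also write out the grading and counit checks that the paper calls clear.
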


\begin{proof}
Suppose $a=\alpha_1|\alpha_2|\cdots |\alpha_r.$ We compute
 \begin{align*}
   (\bigtriangleup \otimes {\rm{id}} )\circ \bigtriangleup(a) & =(\bigtriangleup \otimes {\rm{id}} )\Bigg(\sum_{K \subseteq[r]} {{\rm st}}(\alpha_{K}) \otimes {{\rm st}}(\alpha_{[r] \backslash K})\Bigg )\\
   &=\sum_{J\subseteq K \subseteq[r]}  {{\rm st}}( {{\rm st}}(\alpha_K)_{{\rm st}_K(J)})\otimes {{\rm st}}({{\rm st}}(\alpha_K)_{{\rm st}_K(K\backslash J)})\otimes{{\rm st}}(\alpha_{[r] \backslash K}),\\
   \intertext{by Proposition~\ref{pro:st} we obtain}
   &= \sum_{J\subseteq K \subseteq[r]} {{\rm st}}(\alpha_{J}) \otimes{{\rm st}}(\alpha_{K \backslash J})\otimes{{\rm st}}(\alpha_{[r] \backslash K}),\\
   \intertext{and since $J$, $K\backslash J$ and $[r]\backslash K$ run over all disjoint decompositions of $[r]$, we can rewrite this as}
   &= \sum_{J\uplus L\uplus M=[r]}{{\rm st}}(\alpha_{J})\otimes{{\rm st}}(\alpha_{L})\otimes{{\rm st}}(\alpha_{M}).
 \end{align*}
 A similar computation gives
\begin{align*}
 ({\rm{id}} \otimes \bigtriangleup)\circ \bigtriangleup(a) =&\ ({\rm{id}}\otimes \bigtriangleup) \Bigg(\sum_{J \subseteq[r]} {{\rm st}}(\alpha_{J}) \otimes {{\rm st}}(\alpha_{[r] \backslash J})\Bigg )\\
 =& \sum_{J\uplus L\uplus M=[r]}{{\rm st}}(\alpha_{J})\otimes{{\rm st}}(\alpha_{L})\otimes{{\rm st}}(\alpha_{M}). 
 \end{align*}
 
 Thus,
$(\bigtriangleup \otimes {\rm{id}} )\circ \bigtriangleup= ({\rm{id}} \otimes \bigtriangleup)\circ \bigtriangleup.$ 
It is clear that $\bigtriangleup$ is graded, and it is easy to verify that $\nu$ is a counit. 
\end{proof}

%%%%%%%%%%%%%%%%%%%%%-4-%%%%%%%%%%%%%%%%%%%%%
\section[Coupling product on permutations]{Coupling product on permutations}
The \emph{shift-up by $n$} map ${}^{\uparrow^n}\colon[m]\to\{1+n,2+n,\ldots,m+n\}$ that sends $i\mapsto i+n$ is order-preserving. We extend this map to any sequence of positive integers
$\beta=b_1b_2\cdots b_\ell$ by setting $\beta^{\uparrow^n}=(b_1+n)(b_{2}+n)\cdots (b_\ell+n)$. Since the shift-up by $n$ is order-preserving, it preserves the positions of absolute ascents in any sequence of positive integers.
In particular,
\begin{lemma}
If $b=\beta_1|\beta_2|\cdots|\beta_s$, then $b^{\uparrow^n}=\beta_1^{\uparrow^n}|\beta_2^{\uparrow^n}|\cdots|\beta_s^{\uparrow^n}$.
\end{lemma}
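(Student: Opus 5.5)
The plan is to argue directly from the definitions of absolute ascent and atom, in parallel with the proof of Lemma~\ref{lem:stA}. Write $b=b_1b_2\cdots b_\ell$ and $b^{\uparrow^n}=c_1c_2\cdots c_\ell$ with $c_j=b_j+n$. The first step is to check that the two sequences have the same set of absolute ascents, i.e.\ $\mathcal{A}(b^{\uparrow^n})=\mathcal{A}(b)$. For $1\le i\le \ell-1$, position $i$ is an absolute ascent of $b^{\uparrow^n}$ exactly when $c_i<c_j$ for all $i<j\le \ell$. Since $c_i<c_j$ is equivalent to $b_i+n<b_j+n$, and hence to $b_i<b_j$, this holds exactly when $i$ is an absolute ascent of $b$. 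Note also that $b^{\uparrow^n}$ is again a sequence of distinct positive integers, so the factorization makes sense.

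The second step is to compare the atoms. They are determined only by the positions $i_1<\cdots<i_{s-1}$ of the absolute ascents, so both sequences are cut at the same positions. The $k$-th atom of $b^{\uparrow^n}$ is therefore $c_{i_{k-1}+1}\cdots c_{i_k}$, with the conventions $i_0=0$ and $i_s=\ell$. This equals $(b_{i_{k-1}+1}+n)\cdots(b_{i_k}+n)=\beta_k^{\uparrow^n}$. Hence $b^{\uparrow^n}=\beta_1^{\uparrow^n}|\cdots|\beta_s^{\uparrow^n}$.

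There is no real obstacle here. The only point needing care is to observe that the factorization depends solely on the relative order of the entries, and that shifting by $n$ preserves this order. The empty sequence and the irreducible case $s=1$ are covered by the same argument.
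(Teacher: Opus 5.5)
Your proof is correct and follows essentially the same approach as the paper. The paper justifies the lemma in one line, noting that shifting up by $n$ is order-preserving and therefore keeps the positions of absolute ascents. Your argument spells out exactly that observation and then reads off the atoms from the common cut positions.
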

\begin{definition}\label{lem:coupling}
Given permutations $a=\alpha_1|\alpha_2|\cdots|\alpha_r$ and $b=\beta_1|\beta_2|\cdots|\beta_s$ of degrees $n$ and $m$, respectively, we define the \emph{coupling product} $\ltimes^{\uparrow}: (a,b)\mapsto a\ltimes b^{\uparrow^n}$ on  
$\mathbb{KS}$ by
$$
a\ltimes b^{\uparrow^n}
=\sum_{\substack{L \subseteq[r] \\ f\colon L \hookrightarrow[s]}}\ddot{\alpha}_1|\ddot{\alpha}_2|\cdots|\ddot{\alpha}_r|\beta_{t_1}^{\uparrow^n}|\beta_{t_2}^{\uparrow^n}
|\cdots|\beta_{t_z}^{\uparrow^n},
$$
where $f\colon L \hookrightarrow[s]$ is an injection, $\{t_1<t_2<\cdots<t_z\}=[s]\setminus f(L)$ and
$$
\ddot{\alpha}_i=
\begin{cases}
\beta_{f(i)}^{\uparrow^n}\alpha_i, & i\in L, \\
\alpha_i, & i\notin L,
\end{cases}
\hspace{1cm} i=1,2,\ldots,r,
$$
 are atoms.
\end{definition}
\begin{remark}\label{remark4.2}
There is also a more symmetric way of understanding $a\ltimes b^{\uparrow^n}$, as a sum over all
bipartite matchings between the atoms of $a$ and the atoms of $b^{\uparrow^n}$. Matched atoms
get merged by appending the $a$-atom to the right of the $b^{\uparrow^n}$-atom. Then all atoms -- unmerged and merged -- get concatenated in the order of increasing last entries. That is, each term in $a\ltimes b^{\uparrow^n}$ corresponds to a unique subset of the set 
$$
\{\underbrace{\alpha_i,\beta_j^{\uparrow^n}}_{\text{unmerged}},\underbrace{\beta_j^{\uparrow^n}\alpha_i}_{\text{merged}}\mid i\in [r], j\in [s]\}
$$
that contains unmerged and merged atoms and involves every atom of $a$ and $b^{\uparrow^n}$ exactly once; conversely, any such subset corresponds to a term.
\end{remark}

The \emph{unit} $\mu:\mathbb{K}\rightarrow \mathbb{KS}$ for the coupling product $\ltimes^{\uparrow}$ is given by $\mu(1)=\epsilon$. The product $\ltimes^{\uparrow}$ is noncommutative. We will see that this operation is associative in Theorem~\ref{thm:ass}.

\begin{example}
Let $a=\red{1}|\red{2}$ and $b=\blue{31}|\blue{2}|\blue{4}$. Then $[r]=\{1,2\}, [s]=\{1,2,3\}$, and the choices of $L$ and $f$, together with all corresponding terms, are as follows:
\begin{equation*}
\begin{array}{l@{\hspace{0.5cm}} r@{\hspace{0.5cm}} r l l}
L=\emptyset, &  f:\emptyset\rightarrow\emptyset, & \ddot{\alpha}_1|\ddot{\alpha}_2|\beta_{1}^{\uparrow^2}|\beta_{2}^{\uparrow^2}|\beta_{3}^{\uparrow^2}
&={\alpha}_1|{\alpha}_2|\beta_{1}^{\uparrow^2}|\beta_{2}^{\uparrow^2}|\beta_{3}^{\uparrow^2}&=\ {\textcolor{red}{1}}|{\textcolor{red}{2}}|{\textcolor{blue}{53}}|{\textcolor{blue}{4}}|{\textcolor{blue}{6}},\\

L=\{1\}, &f:1\mapsto 1, & \ddot{\alpha}_1|\ddot{\alpha}_2|\beta_{2}^{\uparrow^2}|\beta_{3}^{\uparrow^2}&=
\beta_{1}^{\uparrow^2}{\alpha}_1|{\alpha}_2|\beta_{2}^{\uparrow^2}|\beta_{3}^{\uparrow^2}&=\ 
{\textcolor{blue}{53}}{\textcolor{red}{1}}|{\textcolor{red}{2}}|{\textcolor{blue}{4}}|{\textcolor{blue}{6}},\\

& f:1\mapsto 2, &\ddot{\alpha}_1|\ddot{\alpha}_2|\beta_{1}^{\uparrow^2}|\beta_{3}^{\uparrow^2}&=
\beta_{2}^{\uparrow^2}{\alpha}_1|{\alpha}_2|\beta_{1}^{\uparrow^2}|\beta_{3}^{\uparrow^2}&=
\ {\textcolor{blue}{4}} {\textcolor{red}{1}}|{\textcolor{red}{2}}|{\textcolor{blue}{53}}|{\textcolor{blue}{6}},\\

& f:1\mapsto 3, & \ddot{\alpha}_1|\ddot{\alpha}_2|\beta_{1}^{\uparrow^2}|\beta_{2}^{\uparrow^2}&=
\beta_{3}^{\uparrow^2}{\alpha}_1|{\alpha}_2|\beta_{1}^{\uparrow^2}|\beta_{2}^{\uparrow^2}\ &=
\ {\textcolor{blue}{6}}{\textcolor{red}{1}}|{\textcolor{red}{2}}|{\textcolor{blue}{53}}|{\textcolor{blue}{4}},\\

L=\{2\}, & f:2\mapsto 1, & \ddot{\alpha}_1|\ddot{\alpha}_2|\beta_{2}^{\uparrow^2}|\beta_{3}^{\uparrow^2}&=
{\alpha}_1|\beta_{1}^{\uparrow^2}{\alpha}_2|\beta_{2}^{\uparrow^2}|\beta_{3}^{\uparrow^2}
&=\ {\textcolor{red}{1}}|{\textcolor{blue}{53}}{\textcolor{red}{2}}|{\textcolor{blue}{4}}|{\textcolor{blue}{6}},\\

& f:2\mapsto 2, & \ddot{\alpha}_1\ddot{\alpha}_2|\beta_{1}^{\uparrow^2}|\beta_{3}^{\uparrow^2}&= {\alpha}_1|\beta_{2}^{\uparrow^2}{\alpha}_2|\beta_{1}^{\uparrow^2}|\beta_{3}^{\uparrow^2}
&=\ {\textcolor{red}{1}}|{\textcolor{blue}{4}}{\textcolor{red}{2}}|{\textcolor{blue}{53}}|{\textcolor{blue}{6}},\\

 &  f:2\mapsto 3, & \ddot{\alpha}_1|\ddot{\alpha}_2|\beta_{1}^{\uparrow^2}|\beta_{2}^{\uparrow^2}&=
 {\alpha}_1|\beta_{3}^{\uparrow^2}{\alpha}_2|\beta_{1}^{\uparrow^2}|\beta_{2}^{\uparrow^2}
 &=\ {\textcolor{red}{1}}|{\textcolor{blue}{6}}{\textcolor{red}{2}}|{\textcolor{blue}{53}}|{\textcolor{blue}{4}},\\
 
L=\{1,2\}, &f:1\mapsto 1,&\\
 &  2\mapsto 2, &\ddot{\alpha}_1|\ddot{\alpha}_2|\beta_{3}^{\uparrow^2}&=
 \beta_{1}^{\uparrow^2}{\alpha}_1|\beta_{2}^{\uparrow^2}{\alpha}_2|\beta_{3}^{\uparrow^2}
 &=\ {\textcolor{blue}{53}}{\textcolor{red}{1}}|{\textcolor{blue}{4}}{\textcolor{red}{2}}|{\textcolor{blue}{6}},\\
 
 &f:1\mapsto 1,&\\
 &  2\mapsto 3, &\ddot{\alpha}_1|\ddot{\alpha}_2|\beta_{2}^{\uparrow^2}&=
 \beta_{1}^{\uparrow^2}{\alpha}_1|\beta_{3}^{\uparrow^2}{\alpha}_2|\beta_{2}^{\uparrow^2}
 &=\ {\textcolor{blue}{53}}{\textcolor{red}{1}}|{\textcolor{blue}{6}}{\textcolor{red}{2}}|{\textcolor{blue}{4}},\\
 
 &f:1\mapsto 2,&\\
 &  2\mapsto 1, &\ddot{\alpha}_1|\ddot{\alpha}_2|\beta_{3}^{\uparrow^2}&=
\beta_{2}^{\uparrow^2}{\alpha}_1|\beta_{1}^{\uparrow^2}{\alpha}_2|\beta_{3}^{\uparrow^2}
 &=\ {\textcolor{blue}{4}} {\textcolor{red}{1}}|{\textcolor{blue}{53}}{\textcolor{red}{2}}|{\textcolor{blue}{6}},\\
 
 &f:1\mapsto 2,&\\
 &  2\mapsto 3, &\ddot{\alpha}_1|\ddot{\alpha}_2|\beta_{1}^{\uparrow^2}&=
\beta_{2}^{\uparrow^2}{\alpha}_1|\beta_{3}^{\uparrow^2}{\alpha}_2|\beta_{1}^{\uparrow^2}
 &=\ {\textcolor{blue}{4}} {\textcolor{red}{1}}|{\textcolor{blue}{6}}{\textcolor{red}{2}}|{\textcolor{blue}{53}},\\
 
 &f:1\mapsto 3,&\\
 &  2\mapsto 1, &\ddot{\alpha}_1|\ddot{\alpha}_2|\beta_{2}^{\uparrow^2}&=
 \beta_{3}^{\uparrow^2}{\alpha}_1|\beta_{1}^{\uparrow^2}{\alpha}_2|\beta_{2}^{\uparrow^2}
 &=\ {\textcolor{blue}{6}} {\textcolor{red}{1}}|{\textcolor{blue}{53}}{\textcolor{red}{2}}|{\textcolor{blue}{4}},\\
 
 &f:1\mapsto 3,&\\
 &  2\mapsto 2, &\ddot{\alpha}_1|\ddot{\alpha}_2|\beta_{1}^{\uparrow^2}&=
 \beta_{3}^{\uparrow^2}{\alpha}_1|\beta_{2}^{\uparrow^2}{\alpha}_2|\beta_{1}^{\uparrow^2}
 &=\ {\textcolor{blue}{6}} {\textcolor{red}{1}}|{\textcolor{blue}{4}}{\textcolor{red}{2}}|{\textcolor{blue}{53}}.\\
\end{array}
\end{equation*}
Thus,
     \begin{align*}
 a\ltimes b^{\uparrow^2}=&\ {\textcolor{red}{1}}|{\textcolor{red}{2}}|{\textcolor{blue}{53}}|{\textcolor{blue}{4}}|{\textcolor{blue}{6}}
 +{\textcolor{blue}{53}}{\textcolor{red}{1}}|{\textcolor{red}{2}}|{\textcolor{blue}{4}}|{\textcolor{blue}{6}}
 +{\textcolor{blue}{4}} {\textcolor{red}{1}}|{\textcolor{red}{2}}|{\textcolor{blue}{53}}|{\textcolor{blue}{6}}
 +{\textcolor{blue}{6}}{\textcolor{red}{1}}|{\textcolor{red}{2}}|{\textcolor{blue}{53}}|{\textcolor{blue}{4}}
 +{\textcolor{red}{1}}|{\textcolor{blue}{53}}{\textcolor{red}{2}}|{\textcolor{blue}{4}}|{\textcolor{blue}{6}}\\
 &+{\textcolor{red}{1}}|{\textcolor{blue}{4}}{\textcolor{red}{2}}|{\textcolor{blue}{53}}|{\textcolor{blue}{6}}
 +{\textcolor{red}{1}}|{\textcolor{blue}{6}}{\textcolor{red}{2}}|{\textcolor{blue}{53}}|{\textcolor{blue}{4}}
 +{\textcolor{blue}{53}}{\textcolor{red}{1}}|{\textcolor{blue}{4}}{\textcolor{red}{2}}|{\textcolor{blue}{6}}
 +{\textcolor{blue}{53}}{\textcolor{red}{1}}|{\textcolor{blue}{6}}{\textcolor{red}{2}}|{\textcolor{blue}{4}}
 +{\textcolor{blue}{4}} {\textcolor{red}{1}}|{\textcolor{blue}{53}}{\textcolor{red}{2}}|{\textcolor{blue}{6}}\\
 &+{\textcolor{blue}{4}} {\textcolor{red}{1}}|{\textcolor{blue}{6}}{\textcolor{red}{2}}|{\textcolor{blue}{53}}
 +{\textcolor{blue}{6}} {\textcolor{red}{1}}|{\textcolor{blue}{53}}{\textcolor{red}{2}}|{\textcolor{blue}{4}}
 +{\textcolor{blue}{6}} {\textcolor{red}{1}}|{\textcolor{blue}{4}}{\textcolor{red}{2}}|{\textcolor{blue}{53}}.
    \end{align*}
\end{example}
Let $a=a_1a_2\cdots a_n=\alpha_1|\alpha_2|\cdots |\alpha_r$ and $b=b_1b_2\cdots b_m=\beta_1|\beta_2|\cdots|\beta_s$ be two sequences of distinct positive integers. If $a_i<b_j$ for all $i\in [n]$ and $j\in [m]$, we can also use Definition \ref{lem:coupling} to define the coupling product $a\ltimes b$ of the sequences $a$ and $b$. 

\begin{theorem} \label{thm:ass}
$(\mathbb{KS},\ltimes^{\uparrow},\mu)$ is a graded algebra.
\end{theorem}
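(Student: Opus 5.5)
The plan is to verify the three algebra axioms: degree, unit, and associativity. Associativity is the only substantial one. Degree is immediate, since every term of $a\ltimes b^{\uparrow^n}$ uses each entry of $a$ and of $b^{\uparrow^n}$ exactly once, so it lies in $S_{n+m}$. The unit axiom is also immediate: if $a=\epsilon$ or $b=\epsilon$, then $r=0$ or $s=0$, the only choice is $L=\emptyset$, and the sum reduces to the single term $b$ or $a$.

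The key preliminary step, which I expect to be the main obstacle, is a structural lemma justifying the phrase ``are atoms'' in Definition~\ref{lem:coupling} and the ordering in Remark~\ref{remark4.2}. I will prove three facts.
\begin{itemize}
\item[(i)] If $a=\alpha_1|\cdots|\alpha_r$ is a sequence of distinct integers, the last entry of each $\alpha_k$ is its minimum. Otherwise the minimum of $\alpha_k$ would sit at an earlier position $j$ and be smaller than everything to its right, since later atoms exceed the last entry of $\alpha_k$. Then $j$ would be an absolute ascent, a contradiction. Consequently the atoms appear in increasing order of their last entries.
\item[(ii)] Every merged block $\beta^{\uparrow^n}\alpha$ is irreducible. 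No position inside $\beta^{\uparrow^n}$ is an absolute ascent, because all entries of $\alpha$ are smaller. No position inside $\alpha$ before its end is an absolute ascent, because $\alpha$ is irreducible.
\item[(iii)] Conversely, any concatenation of irreducible blocks, each ending in its minimum, arranged in increasing order of these minima, has exactly the block boundaries as its absolute ascents. Hence this concatenation is its factorization.
\end{itemize}
These facts show that each term of the product is uniquely determined by the \emph{set} of its blocks, and that its atoms are exactly those blocks. This also holds for the coupling product of sequences $a,b$ with all entries of $a$ below all entries of $b$, which is the setting needed when iterating.

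With this in hand, I prove $(a\ltimes b^{\uparrow^n})\ltimes c^{\uparrow^{n+m}}=a\ltimes (b\ltimes c^{\uparrow^m})^{\uparrow^n}$ for $a,b,c$ of degrees $n,m,p$ with atoms $\alpha_i,\beta_j,\gamma_k$. I will show that both sides equal the sum, over all partitions of the multiset of atoms $\{\alpha_i\}\cup\{\beta_j^{\uparrow^n}\}\cup\{\gamma_k^{\uparrow^{n+m}}\}$ into chains, of the permutation obtained by sorting the chains by minima. Here a chain contains at most one atom from each of $a$, $b$, $c$. Each chain is concatenated in the order $\gamma\beta\alpha$, giving one of $\alpha$, $\beta$, $\gamma$, $\beta\alpha$, $\gamma\alpha$, $\gamma\beta$, $\gamma\beta\alpha$, up to the shifts.

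For the left side, by the lemma the atoms of a term of $a\ltimes b^{\uparrow^n}$ are the blocks $\alpha$, $\beta$, $\beta\alpha$. Multiplying by $c^{\uparrow^{n+m}}$ prefixes some of these blocks by distinct $\gamma$'s. This gives a bijection between pairs (a partial matching of $a$- and $b$-atoms, then a partial matching of the resulting blocks with $c$-atoms) and chain partitions. For the right side, the atoms of $b\ltimes c^{\uparrow^m}$ are $\beta$, $\gamma$, $\gamma\beta$ (shifted by $n$ afterwards, using the shift lemma). Matching these with $a$-atoms appends a distinct $\alpha$ to the right of some of them, giving again a bijection with chain partitions. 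Since all terms have multiplicity one on both sides, associativity follows. The only care needed is to check that both bijections are well defined and invertible: a chain partition is recovered by forgetting the $c$-atoms, respectively the $a$-atoms. This is routine once the structural lemma is established.
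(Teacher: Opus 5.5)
Your proposal is correct and follows essentially the paper's argument. The paper also identifies both $(a\ltimes b^{\uparrow^n})\ltimes c^{\uparrow^{n+m}}$ and $a\ltimes(b\ltimes c^{\uparrow^m})^{\uparrow^n}$ with sums over collections of chains $\alpha,\beta,\gamma,\beta\alpha,\gamma\alpha,\gamma\beta,\gamma\beta\alpha$ that use each atom exactly once. It does so by applying Remark~\ref{remark4.2} twice and observing that the resulting sets coincide by associativity of concatenation. Your structural facts (i)--(iii) and your explicit check that the two bijections are invertible make rigorous what the paper takes for granted in the phrase ``are atoms'' of Definition~\ref{lem:coupling} and in Remark~\ref{remark4.2}.
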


\begin{proof}
Let $a=\alpha_1|\alpha_2|\cdots|\alpha_r$, $b=\beta_1|\beta_2|\cdots|\beta_s$ and $c=\gamma_1|\gamma_2|\cdots|\gamma_t$ be permutations of degrees $n$, $m$ and $\ell$, respectively.

Using Remark \ref{remark4.2}, each term in $a\ltimes b^{\uparrow^n}$ must be given by a subset of
$$
\{\underbrace{\alpha_i,\beta_j^{\uparrow^n}}_{\text{unmerged}},\underbrace{\beta_j^{\uparrow^n}\alpha_i}_{\text{merged}}\mid 1\leq i\leq r, 1\leq j\leq s\},
$$
where each atom of $a$ and $b^{\uparrow^n}$ appears exactly once, and any such subset yields a term. Using Remark \ref{remark4.2} again, the unmerged atoms of the terms in $(a\ltimes b^{\uparrow^n})\ltimes c^{\uparrow^{n+m}}$ are $$\{\alpha_i,\beta_j^{\uparrow^{n}},\beta_j^{\uparrow^{n}}\alpha_i,\gamma_k^{\uparrow^{n+m}}\mid 1\leq i\leq r, 1\leq j\leq s, 1\leq k\leq t\},$$ and the merged atoms are $$\{\gamma_k^{\uparrow^{n+m}}\alpha_i,\gamma_k^{\uparrow^{n+m}}\beta_j^{\uparrow^{n}},\gamma_k^{\uparrow^{n+m}}(\beta_j^{\uparrow^{n}}\alpha_i)\mid 1\leq i\leq r, 1\leq j\leq s, 1\leq k\leq t\}.$$
Thus, each term in $(a\ltimes b^{\uparrow^n})\ltimes c^{\uparrow^{n+m}}$ must be given by a subset of

$$A=\left\{ \alpha_i,\beta_j^{\uparrow^{n}},\beta_j^{\uparrow^{n}}\alpha_i,\gamma_k^{\uparrow^{n+m}},
\gamma_k^{\uparrow^{n+m}}\alpha_i,\gamma_k^{\uparrow^{n+m}}\beta_j^{\uparrow^{n}},\gamma_k^{\uparrow^{n+m}}(\beta_j^{\uparrow^{n}}\alpha_i)
\begin{array}{|c} 1\leq i\leq r,\\1\leq j\leq s,\\1\leq k\leq t \end{array} \right\},
$$
 and each atom in $\alpha$, $\beta^{\uparrow^{n}}$ and $\gamma^{\uparrow^{n+m}}$ appears exactly once.

Obviously, $a\ltimes(b\ltimes c^{\uparrow^{m}})^{\uparrow^{n}}=a\ltimes(b^{\uparrow^{n}}\ltimes c^{\uparrow^{n+m}})$, where the left-hand side is the coupling product of permutations and the right-hand side is the coupling product of sequences. 
Using Remark \ref{remark4.2}, each term in $b^{\uparrow^{n}}\ltimes c^{\uparrow^{n+m}}$ must be given by a subset of
$$
\{\underbrace{\beta_j^{\uparrow^{n}},\gamma_k^{\uparrow^{n+m}}}_{\text{unmerged}},\underbrace{\gamma_k^{\uparrow^{n+m}}\beta_j^{\uparrow^{n}}}_{\text{merged}}\mid 1\leq j\leq s, 1\leq k\leq t\},
$$
where each atom of $b^{\uparrow^{n}}$ and $c^{\uparrow^{n+m}}$ appears exactly once.
Using Remark \ref{remark4.2} again, the unmerged atoms of the terms  in $a\ltimes(b^{\uparrow^{n}}\ltimes c^{\uparrow^{n+m}})$  are $$\{\alpha_i,\beta_j^{\uparrow^{n}},\gamma_k^{\uparrow^{n+m}},\gamma_k^{\uparrow^{n+m}}\beta_j^{\uparrow^{n}}\mid 1\leq i\leq r, 1\leq j\leq s, 1\leq k\leq t\}$$ and the merged atoms are  $$\{\beta_j^{\uparrow^{n}}\alpha_i,\gamma_k^{\uparrow^{n+m}}\alpha_i,(\gamma_k^{\uparrow^{n+m}}\beta_j^{\uparrow^{n}})\alpha_i\mid 1\leq i\leq r, 1\leq j\leq s, 1\leq k\leq t\}.$$ 
Thus, each term in $a\ltimes(b^{\uparrow^{n}}\ltimes c^{\uparrow^{n+m}})$
 must be given by a subset of the set 
$$B =\left\{\alpha_i,\beta_j^{\uparrow^{n}},\gamma_k^{\uparrow^{n+m}},\gamma_k^{\uparrow^{n+m}}\beta_j^{\uparrow^{n}},\beta_j^{\uparrow^{n}}\alpha_i,\gamma_k^{\uparrow^{n+m}}\alpha_i,(\gamma_k^{\uparrow^{n+m}}\beta_j^{\uparrow^{n}})\alpha_i
\begin{array}{|c} 1\leq i\leq r,\\1\leq j\leq s,\\1\leq k\leq t \end{array}\right\},$$
 and each atom in $\alpha$, $\beta^{\uparrow^{n}}$ and $\gamma^{\uparrow^{n+m}}$ appears exactly once.

Since concatenation is associative, i.e., $\gamma_k^{\uparrow^{n+m}}(\beta_j^{\uparrow^{n}}\alpha_i)=(\gamma_k^{\uparrow^{n+m}}\beta_j^{\uparrow^{n}})\alpha_i$, we have $A=B$. This shows that the terms of $(a\ltimes b^{\uparrow^n})\ltimes c^{\uparrow^{n+m}}$ are the same as those of $a\ltimes(b\ltimes c^{\uparrow^{m}})^{\uparrow^{n}}$, and we obtain
$$
(a\ltimes b^{\uparrow^n})\ltimes c^{\uparrow^{n+m}} = a\ltimes(b\ltimes c^{\uparrow^{m}})^{\uparrow^{n}}.
$$
Hence, the coupling product is associative, and $(\mathbb{KS},\ltimes^{\uparrow},\mu)$ is a graded algebra.
\end{proof}

%%%%%%%%%%%%%%%%%-5-%%%%%%%%%%%%%%%
\section{Hopf algebra on permutations}
In this section, we prove that the graded algebra $(\mathbb{KS},\ltimes^{\uparrow},\mu)$ and the graded coalgebra $(\mathbb{KS},\bigtriangleup,\nu)$ are compatible, so that $(\mathbb{KS}, \ltimes^{\uparrow},\mu, \bigtriangleup ,\nu)$ is a connected graded Hopf algebra. The proof of compatibility relies on the following proposition.

\begin{pro}\label{prop:stcoupling}
Let $a=\alpha_1|\alpha_2|\cdots|\alpha_r$ and $b=\beta_1|\beta_2|\cdots|\beta_s$ be two permutations of degrees $n$ and $m$, respectively. 
For any $J\subseteq [r]$ and $K\subseteq [s]$, let $d$ be the degree of the permutation ${{\rm st}} (\alpha_{J})$. Then
$$ {{\rm st}} (\alpha_{J}\ltimes {\beta}_{K}^{\uparrow^n})={{\rm st}} (\alpha_{J})\ltimes {{\rm st}} (\beta_{K}^{\uparrow^n})^{\uparrow^d}={{\rm st}} (\alpha_{J})\ltimes {{\rm st}} (\beta_{K})^{\uparrow^d}.$$
\end{pro}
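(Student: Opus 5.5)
We prove the two equalities separately, starting with the second, which is the easier one. Since $\beta_K^{\uparrow^n}$ and $\beta_K$ differ only by a constant shift, which is order-preserving, their standardizations coincide: ${\rm st}(\beta_K^{\uparrow^n})={\rm st}(\beta_K)$. Applying $\uparrow^d$ to both sides and coupling with ${\rm st}(\alpha_J)$ gives the second equality at once.

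For the first equality, we note that $\alpha_J\ltimes\beta_K^{\uparrow^n}$ is a coupling product of sequences in the sense defined after the example. Indeed, every entry of $\alpha_J$ is at most $n$ and every entry of $\beta_K^{\uparrow^n}$ exceeds $n$. The factorization of $\alpha_J$ is $\alpha_{j_1}|\cdots|\alpha_{j_p}$, and that of $\beta_K^{\uparrow^n}$ is $\beta_{k_1}^{\uparrow^n}|\cdots|\beta_{k_q}^{\uparrow^n}$. To see this for $\alpha_J$, take $i$ the last position of some $\alpha_{j_l}$. It is an absolute ascent of $a$, so $a_i$ is smaller than every later entry of $a$, and in particular smaller than every later entry of $\alpha_J$. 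Conversely, no new absolute ascents appear inside an atom $\alpha_j$. If some position inside $\alpha_j$ had an entry smaller than everything after it in $\alpha_J$, then the last entry of $\alpha_j$ would be smaller than all later entries of $a$, so it is still an absolute ascent of $a$. I expect this irreducibility-preservation to need some care. The cleaner route is this: the last entry of each atom is its minimum, since it is smaller than everything to its right in $a$ and the atom has no internal absolute ascent, so every entry to the right within the atom is larger. It follows that within $\alpha_j$ any earlier position has a later, smaller entry, namely the atom's last entry. Hence the atoms of $\alpha_J$ are exactly the $\alpha_j$ for $j\in J$, and the same holds for $\beta_K$.

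Set $A={\rm Set}(\alpha_J)\cup{\rm Set}(\beta_K^{\uparrow^n})$. Every term of $\alpha_J\ltimes\beta_K^{\uparrow^n}$ is a rearrangement of $A$, and ${\rm st}$ of each term equals ${\rm st}_A$ applied to it. Because all elements of ${\rm Set}(\alpha_J)$ are below all of ${\rm Set}(\beta_K^{\uparrow^n})$, we have ${\rm st}_A(\alpha_j)={\rm st}_{{\rm Set}(\alpha_J)}(\alpha_j)$ and ${\rm st}_A(\beta_k^{\uparrow^n})={\rm st}_{{\rm Set}(\beta_K)}(\beta_k)^{\uparrow^d}$. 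By Lemma~\ref{lem:stA}, these are precisely the atoms of ${\rm st}(\alpha_J)$ and of ${\rm st}(\beta_K)^{\uparrow^d}$.

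Now use the description of Remark~\ref{remark4.2}. A term of $\alpha_J\ltimes\beta_K^{\uparrow^n}$ is determined by a bipartite matching between the atoms. Matched atoms are concatenated as $\beta^{\uparrow^n}\alpha$, and all resulting blocks are ordered by increasing last entries. The map ${\rm st}_A$ is order-preserving and acts letterwise, so it commutes with concatenation. It sends matched blocks $\beta_k^{\uparrow^n}\alpha_j$ to ${\rm st}_A(\beta_k^{\uparrow^n}){\rm st}_A(\alpha_j)$ and preserves the ordering by last entries. The matchings on both sides are indexed by the same data $(L,f)$, so ${\rm st}_A$ induces a bijection between the terms of $\alpha_J\ltimes\beta_K^{\uparrow^n}$ and those of ${\rm st}(\alpha_J)\ltimes{\rm st}(\beta_K)^{\uparrow^d}$, term by term. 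Summing over all terms gives the first equality.
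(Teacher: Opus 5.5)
Your proof is correct and follows essentially the same route as the paper. You standardize each term with respect to the union $C={\rm Set}(\alpha_J)\cup{\rm Set}(\beta_K^{\uparrow^n})$, observe that ${\rm st}_C$ restricts to ${\rm st}_{{\rm Set}(\alpha_J)}$ on the $\alpha$-letters and sends each $\beta_k^{\uparrow^n}$ to ${\rm st}_{{\rm Set}(\beta_K)}(\beta_k)^{\uparrow^d}$, and then apply Lemma~\ref{lem:stA} atom by atom, so that terms indexed by the same data $(L,f)$ correspond. Your extra check that the atoms of $\alpha_J$ are exactly the $\alpha_j$ with $j\in J$ (via the last entry of an atom being its minimum) supplies a detail the paper takes for granted.
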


\begin{proof} 
Let $J=\{j_1,\ldots,j_p\}$ and $K=\{k_1,\ldots,k_q\}$ with $j_1<\cdots<j_p$ and $k_1<\cdots<k_q$. 
Using Definition~\ref{lem:coupling}, any term of $\alpha_{J}\ltimes {\beta}_{K}^{\uparrow^n}$ is of the form
$$\ddot{\alpha}_{j_1}|\ddot{\alpha}_{j_2}|\cdots|\ddot{\alpha}_{j_p}|\beta_{k_{t_1}}^{\uparrow n}|\beta_{k_{t_2}}^{\uparrow n}|\cdots|\beta_{k_{t_z}}^{\uparrow n},$$
where $\{k_{t_1}<k_{t_2}<\cdots<k_{t_z}\}=K\setminus f(L)$
for some $L\subseteq J$ and some injection $f\colon L\hookrightarrow K$. For convenience, we write $\beta_{K\setminus f(L)}^{\uparrow n}$ as a shorthand for $\beta_{k_{t_1}}^{\uparrow n}|\beta_{k_{t_2}}^{\uparrow n}|\cdots|\beta_{k_{t_z}}^{\uparrow n}$. Thus, $$\ddot{\alpha}_{j_1}|\ddot{\alpha}_{j_2}|\cdots|\ddot{\alpha}_{j_p}|\beta_{k_{t_1}}^{\uparrow n}|\beta_{k_{t_2}}^{\uparrow n}|\cdots|\beta_{k_{t_z}}^{\uparrow n}=\ddot{\alpha}_{j_1}|\ddot{\alpha}_{j_2}|\cdots|\ddot{\alpha}_{j_p}|\beta_{K\setminus f(L)}^{\uparrow n}.$$
 Let $A={\rm Set}(\alpha_J)$, $B={\rm Set}(\beta_K)$ and $C=A\cup B^{\uparrow^n}$.
Clearly, all elements of $B^{\uparrow^n}$ are strictly greater than any element of $A$. Since $d=|A|$, we have ${\rm st}_C\colon C\to[d+h]$ for $h=|B|$,
$$ {\rm st}_C\big|_A\colon A\to[d]\quad\text{and}\quad {\rm st}_C\big|_{B^{\uparrow^n}}\colon B^{\uparrow^n}\to\{d+1,\ldots,d+h\}=[h]^{\uparrow^d}.$$
So $ {\rm st}_C\big|_A= {\rm st}_A$ and ${\rm st}_C\big|_{B^{\uparrow^n}}=({\rm st}_{B^{\uparrow^n}})^{\uparrow^d}$.
Using Lemma~\ref{lem:stA}, we have that the terms of 
$ {{\rm st}} (\alpha_{J}\ltimes {\beta}_{K}^{\uparrow^n}) $ are of the form
\begin{equation}\label{eq:couplingterm}
\begin{aligned}
   {\rm st}(\ddot{\alpha}_{j_1}|\cdots|\ddot{\alpha}_{j_p}|{\beta}_{K\backslash f(L)}^{\uparrow^n})
   =&\ {\rm st}_C(\ddot{\alpha}_{j_1}|\cdots|\ddot{\alpha}_{j_p}|{\beta}_{K\backslash f(L)}^{\uparrow^n})\\
   =&\ {\rm st}_C(\ddot{\alpha}_{j_1})|\cdots|{\rm st}_C(\ddot{\alpha}_{j_p})|{\rm st}_C({\beta}_{K\backslash f(L)}^{\uparrow^n}).
\end{aligned}
\end{equation}
If $\ddot{\alpha}_{j_i}={\alpha}_{j_i}$, then we have 
  $${\rm st}_C(\ddot{\alpha}_{j_i})={\rm st}_C({\alpha}_{j_i})={\rm st}_A({\alpha}_{j_i}).$$
If $\ddot{\alpha}_{j_i}=\beta_{f(j_i)}^{\uparrow^n}{\alpha}_{j_i}$, then we have 
  $${\rm st}_C(\ddot{\alpha}_{j_i})={\rm st}_C(\beta_{f(j_i)}^{\uparrow^n}{\alpha}_{j_i})={\rm st}_{B^{\uparrow^n}}(\beta_{f(j_i)}^{\uparrow^n})^{\uparrow^d}{\rm st}_A({\alpha}_{j_i})
      ={\rm st}_{B}(\beta_{f(j_i)})^{\uparrow^d}{\rm st}_A({\alpha}_{j_i}),$$
and
  $$ {\rm st}_C({\beta}_{K\backslash f(L)}^{\uparrow^n})={\rm st}_{B^{\uparrow^n}}({\beta}_{K\backslash f(L)}^{\uparrow^n})^{\uparrow^d}={\rm st}_{B}({\beta}_{K\backslash f(L)})^{\uparrow^d}.$$
This shows that the right-hand side of Equation~\ref{eq:couplingterm} gives the terms of 
   $${{\rm st}_A} (\alpha_{J})\ltimes {{\rm st}_{B^{\uparrow^n}}} (\beta_{K}^{\uparrow^n})^{\uparrow^d}
   ={{\rm st}} (\alpha_{J})\ltimes {{\rm st}} (\beta_{K}^{\uparrow^n})^{\uparrow^d}
   ={{\rm st}} (\alpha_{J})\ltimes {{\rm st}} (\beta_{K})^{\uparrow^d}.$$
\vskip -20pt
\end{proof}
\begin{example}
  For $a=\textcolor{red}{1|32|4}$, $b=\textcolor{blue}{31|2|4}$, $J=\{2,3\}$ and $K=\{1,3\}$, we have
\begin{align*}
   {{\rm st}} (\alpha_{J}\ltimes {\beta}_{K}^{\uparrow^4})
   =&\, {{\rm st}}(\red{32}|\red{4}\ltimes \blue{31}|\blue{4}^{\uparrow^4}) =\, {{\rm st}}(\red{32}|\red{4}\ltimes \blue{75}|\blue{8}) \\
    =&\, {{\rm st}}(\red{32}|\red{4}|\blue{75}|\blue{8})
         +{{\rm st}}(\blue{75}\red{32}|\red{4}|\blue{8})
         +{{\rm st}}(\blue{8}\red{32}|\red{4}|\blue{75})
         +{{\rm st}}(\red{32}|\blue{75}\red{4}|\blue{8})\\
    &\quad 
         +{{\rm st}}(\red{32}|\blue{8}\red{4}|\blue{75})
         +{{\rm st}}(\blue{75}\red{32}|\blue{8}\red{4})
         +{{\rm st}}(\blue{8}\red{32}|\blue{75}\red{4})\\
    =&\, {}\red{21}|\red{3}|\blue{54}|\blue{6}
         +{}\blue{54}\red{21}|\red{3}|\blue{6}
         +{}\blue{6}\red{21}|\red{3}|\blue{54}
         +{}\red{21}|\blue{54}\red{3}|\blue{6}
         +{}\red{21}|\blue{6}\red{3}|\blue{54}\\
         &+{}\blue{54}\red{21}|\blue{6}\red{3}
         +{}\blue{6}\red{21}|\blue{54}\red{3}\\
   =&\, \red{21}|\red{3}\ltimes \blue{21}|\blue{3}^{\uparrow^3}
   = \, {{\rm st}}(\red{32}|\red{4})\ltimes{{\rm st}}(\blue{75}|\blue{8})^{\uparrow^3}
   =\,  {{\rm st}} (\alpha_{J})\ltimes {{\rm st}} (\beta_{K}^{\uparrow^4})^{\uparrow^3}.
 \end{align*}
\end{example}

\begin{theorem}
  $(\mathbb{KS}, \ltimes^{\uparrow},\mu, \bigtriangleup ,\nu)$ is a graded, connected bialgebra.
\end{theorem}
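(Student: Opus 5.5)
We already know that $(\mathbb{KS},\ltimes^{\uparrow},\mu)$ is a graded algebra (Theorem~\ref{thm:ass}) and that $(\mathbb{KS},\bigtriangleup,\nu)$ is a graded coalgebra. Connectedness is immediate, since $\mathbb{K}S_0=\mathbb{K}\epsilon$. So the real content is compatibility: we must show $\bigtriangleup(a\ltimes b^{\uparrow^n})=\bigtriangleup(a)\,\bigtriangleup(b)$, where the right-hand side uses the product $\ltimes^{\uparrow}$ componentwise on $\mathbb{KS}\otimes\mathbb{KS}$. We also need $\nu$ to be multiplicative and $\bigtriangleup(\epsilon)=\epsilon\otimes\epsilon$. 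The counit statement is trivial: $a\ltimes b^{\uparrow^n}$ is a sum of permutations of degree $n+m$, so $\nu(a\ltimes b^{\uparrow^n})=0$ unless $a=b=\epsilon$. Throughout, let $a=\alpha_1|\cdots|\alpha_r$ of degree $n$ and $b=\beta_1|\cdots|\beta_s$ of degree $m$.

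We expand both sides via Remark~\ref{remark4.2}.

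\emph{Left-hand side.} A term $w$ of $a\ltimes b^{\uparrow^n}$ corresponds to a partial matching $M$ between $[r]$ and $[s]$. The atoms of $w$ are the merged atoms $\beta_j^{\uparrow^n}\alpha_i$ for $(i,j)\in M$, together with the unmatched $\alpha_i$ and unmatched $\beta_j^{\uparrow^n}$. Applying $\bigtriangleup$ to $w$ amounts to choosing a subset $S$ of these atoms and writing ${\rm st}(w_S)\otimes{\rm st}(w_{S^c})$.

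\emph{Right-hand side.} By Definition~\ref{def:copro},
\[
\bigtriangleup(a)\bigtriangleup(b)=\sum_{J\subseteq[r],\,K\subseteq[s]}\bigl({\rm st}(\alpha_J)\ltimes{\rm st}(\beta_K)^{\uparrow^{d}}\bigr)\otimes\bigl({\rm st}(\alpha_{J^c})\ltimes{\rm st}(\beta_{K^c})^{\uparrow^{n-d}}\bigr),
\]
where $d$ is the degree of ${\rm st}(\alpha_J)$. By Proposition~\ref{prop:stcoupling}, the first factor equals ${\rm st}(\alpha_J\ltimes\beta_K^{\uparrow^n})$, and likewise the second equals ${\rm st}(\alpha_{J^c}\ltimes\beta_{K^c}^{\uparrow^n})$. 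A term on the right is therefore indexed by a quadruple $(J,K,M_1,M_2)$, where $M_1$ is a partial matching between $J$ and $K$, and $M_2$ is a partial matching between $J^c$ and $K^c$.

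\emph{The bijection.} Send $(M,S)$ to $J=\{i:\alpha_i \text{ lies in an atom of } S\}$, $K=\{j:\beta_j^{\uparrow^n}\text{ lies in an atom of } S\}$, $M_1=M\cap(J\times K)$ and $M_2=M\cap(J^c\times K^c)$. The inverse sends $(J,K,M_1,M_2)$ to $M=M_1\cup M_2$, with $S$ the set of atoms built from indices in $J\cup K$. This is well defined because a merged atom is placed wholly in $S$ or wholly in $S^c$, so $M$ never matches across the split.

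\emph{Equality of terms.} It remains to check that corresponding terms agree, namely that ${\rm st}(w_S)$ equals the term of ${\rm st}(\alpha_J\ltimes\beta_K^{\uparrow^n})$ indexed by $M_1$, and similarly for $S^c$. Concretely, the subsequence of atoms $w_S$ (taken in their order in $w$) must coincide with the term of $\alpha_J\ltimes\beta_K^{\uparrow^n}$ given by $M_1$. This is the step I expect to be the main obstacle. Two things must be verified.

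First, the blocks of $w$ lying in $S$ are exactly the atoms of $w_S$, i.e.\ the absolute-ascent factorization is inherited by sub-selections of atoms. This follows because atoms are ordered by increasing minima: every entry of a later atom exceeds every entry of an earlier one. Any sub-selection of atoms therefore keeps the same cut points, and no new absolute ascent arises inside a selected atom, since inside an atom the relevant comparisons only involve larger later entries.

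Second, the order of atoms must agree. In $w$, the atoms are ordered by the ordering rule for merged and unmerged atoms in Definition~\ref{lem:coupling} (the $a$-atoms first in their original order, followed by the unmatched $b$-atoms), and this order restricts correctly to the subset. I would carry out this check carefully with the explicit form $\ddot\alpha_{j_1}|\cdots|\ddot\alpha_{j_p}|\beta^{\uparrow^n}_{K\setminus f(L)}$ used in the proof of Proposition~\ref{prop:stcoupling}. Once both points hold, summing over the bijection gives compatibility, and Takeuchi's formula then yields the antipode.
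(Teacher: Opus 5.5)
Your route is the paper's route. The paper splits $\bigtriangleup(a\ltimes b^{\uparrow^n})$ according to the sets $A\subseteq[r]$, $B\subseteq[s]$ of atoms of $a$ and $b$ sent to the left factor. It asserts that this piece equals ${\rm st}(\alpha_A\ltimes\beta_B^{\uparrow^n})\otimes{\rm st}(\alpha_{[r]\setminus A}\ltimes\beta_{[s]\setminus B}^{\uparrow^n})$, then applies Proposition~\ref{prop:stcoupling} and factors. Your bijection $(M,S)\leftrightarrow(J,K,M_1,M_2)$ is exactly the justification of that asserted identity. Your ordering check follows immediately from the explicit form of the terms in Definition~\ref{lem:coupling}.

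The one step that is wrong as written is your reason for the first verification. You claim that every entry of a later atom exceeds every entry of an earlier one. That is the defining property of global ascents, not absolute ascents. It fails in $31|2$, and in the very words you are decomposing: in the term $531|2|4|6$ of $1|2\ltimes(31|2|4)^{\uparrow^2}$, the atom $2$ follows an atom containing larger entries. Your phrase ``inside an atom the relevant comparisons only involve larger later entries'' also has it backwards, since what prevents a new cut is a \emph{smaller} later entry.

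The correct fact is that every atom ends with its minimum. Suppose the minimum of an atom sat at a non-final position $p$. Then $a_p$ would be smaller than all later entries of the atom, by minimality. It would also be smaller than all entries after the atom, since those exceed the atom's last entry, which exceeds $a_p$. So $p$ would be an absolute ascent strictly inside the atom, which is impossible.

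This gives the sub-selection property you need:
\begin{itemize}
\item The last entry of a selected atom is still smaller than everything to its right, because it now faces a subset of what it faced before. So the old cuts persist.
\item Any non-final position of a selected atom still has the atom's own last entry, which is smaller, to its right. So no new cut appears.
\end{itemize}
The paper takes this fact for granted when it calls $\alpha_K$ a factorization into atoms. It is also what guarantees that the atoms of $\alpha_J$ and $\beta_K^{\uparrow^n}$ are the $\alpha_j$ and $\beta_k^{\uparrow^n}$, so that Definition~\ref{lem:coupling} applied to them is indexed by matchings $M_1\subseteq J\times K$. With this replacement your argument is complete.
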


\begin{proof}
 We prove that $\bigtriangleup$ is an algebra homomorphism, i.e.,
  $$\bigtriangleup(a\ltimes b^{\uparrow^n})=\bigtriangleup(a)\ltimes \bigtriangleup(b)^{\uparrow\otimes\uparrow},$$
 for any permutations $a$ and $b$ of degrees $n$ and $m$, respectively. Here, when the multiplication is performed componentwise, $\uparrow\otimes\uparrow$ depends on the degrees of the tensor factors chosen from each term of $\bigtriangleup(a)$.
 
 Let $a=\alpha_1|\alpha_2|\cdots|\alpha_r$ and $b=\beta_1|\beta_2|\cdots|\beta_s$.
   Using Remark \ref{remark4.2}, each term in $a\ltimes b^{\uparrow n}$ must be given by a subset of $\{\alpha_i,\beta_{j}^{\uparrow n},\beta_{j}^{\uparrow n}\alpha_{i}\mid 1\leq i\leq r,1\leq j\leq s\}$, where each atom of $a$ and $b^{\uparrow n}$ appears exactly once. From Definition \ref{def:copro}, for any term of $\bigtriangleup(a \ltimes b^{\uparrow n})$, the atoms to the left of $\otimes$ must be from the set 
   $\{\alpha_i, \beta_{j}^{\uparrow n}, \beta_{j}^{\uparrow n}\alpha_{i} \mid i\in A, j\in B\}$
    for some $A\subseteq [r]$ and $B\subseteq [s]$, where each atom of $\alpha_A$ and $\beta_B^{\uparrow n}$ appears exactly once, and the atoms to the right of $\otimes$ must be from the set $\{\alpha_i, \beta_{j}^{\uparrow n}, \beta_{j}^{\uparrow n}\alpha_{i} \mid i\in [r]\setminus A, j\in [s]\setminus B\}$, where each atom of $\alpha_{[r]\setminus A}$ and $\beta_{[s]\setminus B}^{\uparrow n}$ appears exactly once. For $A\subseteq [r]$ and $B\subseteq [s]$, we denote the sum of all terms in $\bigtriangleup(a \ltimes b^{\uparrow n})$ whose atoms to the left of $\otimes$ are all from the set $\{\alpha_i, \beta_{j}^{\uparrow n}, \beta_{j}^{\uparrow n}\alpha_{i} \mid i\in A, j\in B\}$ by $\bigtriangleup(a\ltimes b^{\uparrow n})|_{A,B}$, i.e., we have
 $$\bigtriangleup(a\ltimes b^{\uparrow n})|_{A,B}=\text{st}(\alpha_A\ltimes\beta_B^{\uparrow n})\otimes\text{st}(\alpha_{[r]\setminus A}\ltimes\beta_{[s]\setminus B}^{\uparrow n}).$$
 Thus,
\begin{align*}
\bigtriangleup(a \ltimes b^{\uparrow^n})
	=& \sum_{\substack{A\subseteq[r] \\ B\subseteq[s]}}   {{\rm st}}(\alpha_{A}\ltimes\beta_{B}^{\uparrow^n})\otimes {{\rm st}}(\alpha_{[r] \backslash A}\ltimes\beta_{[s] \backslash B}^{\uparrow^n}) ,\\
\intertext{using Proposition~\ref{prop:stcoupling} and denoting by $d_A$ the degree of ${\rm st}(\alpha_A)$,}
	=& \sum_{\substack{A\subseteq[r] \\ B\subseteq[s]}}   \big({{\rm st}}(\alpha_{A})\ltimes {\rm st}(\beta_{B})^{\uparrow^{d_A}}\big)
    		 \otimes \big({{\rm st}}(\alpha_{[r] \backslash A})\ltimes {\rm st}(\beta_{[s] \backslash B})^{\uparrow^{d_{[r]\setminus A}}}\big) \\
	=& \Big(\sum_{A\subseteq[r]}   {{\rm st}}(\alpha_{A}) \otimes {{\rm st}}(\alpha_{[r] \backslash A})\Big)   \ltimes
	     \Big(\sum_{B\subseteq[s]}  {\rm st}(\beta_{B}) \otimes  {\rm st}(\beta_{[s] \backslash B})\Big)^{\uparrow\otimes\uparrow} \\	    
	=& \bigtriangleup(a)\ltimes \bigtriangleup(b)^{\uparrow\otimes\uparrow}.
\end{align*}
It is easy to verify that $\nu$ is an algebra homomorphism. We see that all operations respect the grading and that there is a unique permutation of degree 0. Hence, $(\mathbb{KS}, \ltimes^{\uparrow},\mu, \bigtriangleup ,\nu)$ is a graded, connected bialgebra.
\end{proof}

\begin{co}
  $(\mathbb{KS}, \ltimes^{\uparrow},\mu, \bigtriangleup ,\nu)$ is a graded, connected Hopf algebra.
\end{co}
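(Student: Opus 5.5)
The corollary follows from the preceding theorem together with the general fact recalled in Section~2: a graded, connected bialgebra admits a unique antipode. So I would not construct anything new. I would only check that the hypotheses of Takeuchi's formula are met and then, for completeness, describe the antipode recursively.

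\textbf{Step 1: recall what has been established.} The previous theorem shows that $(\mathbb{KS}, \ltimes^{\uparrow},\mu, \bigtriangleup ,\nu)$ is a bialgebra whose four structure maps respect the grading $\mathbb{KS}=\bigoplus_{n\geq0}\mathbb{K}S_n$. Since $S_0=\{\epsilon\}$, we have $\mathbb{K}S_0=\mathbb{K}\epsilon\cong\mathbb{K}$, and $\mu$, $\nu$ identify it with the ground field. This is exactly the connectedness condition.

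\textbf{Step 2: apply Takeuchi's formula.} In a graded connected bialgebra, ${\rm id}-\mu\circ\nu$ is locally nilpotent under convolution: it kills $\mathbb{K}S_0$, and its $k$-th convolution power vanishes on $\mathbb{K}S_n$ for $k>n$. Hence
\[
S=\sum_{k\geq 0}(\mu\circ\nu-{\rm id})^{*k}
\]
is a finite sum on each graded piece. It is the convolution inverse of ${\rm id}$, so it is an antipode.

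Equivalently, one can define $S$ recursively by $S(\epsilon)=\epsilon$ and, for a permutation $a=\alpha_1|\cdots|\alpha_r$ of degree $n\geq1$,
\[
S(a)=-\sum_{K\subsetneq[r]} S({\rm st}(\alpha_K))\ltimes {\rm st}(\alpha_{[r]\setminus K})^{\uparrow^{d_K}},
\]
where $d_K$ is the degree of ${\rm st}(\alpha_K)$. This recursion uses the explicit form of $\bigtriangleup$ from Definition~\ref{def:copro}, and it is well defined by induction on the degree, because for $K\neq[r]$ the degree of ${\rm st}(\alpha_K)$ is less than $n$.

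\textbf{Step 3: uniqueness and the main obstacle.} Uniqueness holds because antipodes are convolution inverses, and inverses in a monoid are unique. There is no real obstacle here, since all the substantive work (coassociativity, associativity and compatibility) was done in the earlier results. The only point needing care is that the degree-zero component is one-dimensional and spanned by the unit $\epsilon$. This is immediate from $S_0=\{\epsilon\}$, $\mu(1)=\epsilon$ and $\nu(\epsilon)=1$.
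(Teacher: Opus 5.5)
Your proposal is correct and matches the paper's proof, which likewise just applies Takeuchi's formula to the graded, connected bialgebra from the preceding theorem. Your explicit recursion $S(a)=-\sum_{K\subsetneq[r]}S({\rm st}(\alpha_K))\ltimes{\rm st}(\alpha_{[r]\setminus K})^{\uparrow^{d_K}}$ is a correct, optional unpacking of that formula using the draw coproduct.
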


\begin{proof}
Since $(\mathbb{KS}, \ltimes^{\uparrow},\mu, \bigtriangleup ,\nu)$ is a graded, connected bialgebra, Takeuchi's formula~\cite{Takeuchi} recursively constructs a unique antipode $S\colon \mathbb{KS}\to \mathbb{KS}$, and $(\mathbb{KS},\ltimes^{\uparrow},\mu, \bigtriangleup ,\nu,S)$ is a Hopf algebra.
\end{proof}

\begin{example}
  \rm{Suppose} $a=\textcolor{red}{1}$ and $b=\blue{31}|\blue{2}$. 
\begin{align*}
  \bigtriangleup(\textcolor{red} {1} \ltimes \blue{31}&|\blue{2}^{\uparrow^1}) =\bigtriangleup(\textcolor{red} {1}\ltimes\blue{42}|\blue{3})
   =\bigtriangleup(\textcolor{red}{1}|\textcolor{blue} {42}|\textcolor{blue} {3}+\textcolor{blue}{42}\textcolor{red}{1}|\textcolor{blue} {3}+\textcolor{blue} {3}\textcolor{red}{1}|\textcolor{blue} {42})  \\
   =&\ \epsilon\otimes\textcolor{red}{1}|\textcolor{blue} {42}|\textcolor{blue} {3}+\textcolor{red}{1}\otimes\textcolor{blue} {31}|\textcolor{blue} {2}+\textcolor{blue} {21}\otimes\textcolor{red}{1}|\textcolor{blue} {2}+\textcolor{blue} {1}\otimes\textcolor{red}{1}|\textcolor{blue} {32}+\textcolor{red}{1}|\textcolor{blue} {32}\otimes\textcolor{blue} {1}\\
   &+\textcolor{red}{1}|\textcolor{blue} {2}\otimes\textcolor{blue} {21}+\textcolor{blue} {31}|\textcolor{blue} {2}\otimes\textcolor{red}{1}
   +\textcolor{red}{1}|\textcolor{blue} {42}|\textcolor{blue} {3}\otimes\epsilon
   +\epsilon\otimes\textcolor{blue}{42}\textcolor{red}{1}|\textcolor{blue} {3}
   +\textcolor{blue} {32}\textcolor{red}{1}\otimes\textcolor{blue} {1}\\
   &+\textcolor{blue} {1}\otimes\textcolor{blue} {32}\textcolor{red}{1}+\textcolor{blue}{42}\textcolor{red}{1}|\textcolor{blue} {3}\otimes\epsilon
   +\epsilon\otimes\textcolor{blue} {3}\textcolor{red}{1}|\textcolor{blue} {42}+\textcolor{blue} {2}\textcolor{red}{1}\otimes\textcolor{blue} {21}+\textcolor{blue} {21}\otimes\textcolor{blue} {2}\textcolor{red}{1}+\textcolor{blue} {3}\textcolor{red}{1}|\textcolor{blue} {42}\otimes\epsilon\\
   =&\ \epsilon\otimes (\textcolor{red} {1}\ltimes\textcolor{blue} {31}|\textcolor{blue} {2}^{\uparrow^1})
       +\textcolor{blue} {21}\otimes (\textcolor{red} {1}\ltimes\textcolor{blue} {1}^{\uparrow^1})
       +\textcolor{blue} {1}\otimes (\textcolor{red} {1}\ltimes\textcolor{blue} {21}^{\uparrow^1})
       +\textcolor{blue} {31}|\textcolor{blue} {2}\otimes\textcolor{red} {1}\\
       &+\textcolor{red} {1}\otimes\textcolor{blue} {31}|\textcolor{blue} {2}
      +(\textcolor{red}{1}\ltimes\textcolor{blue} {21}^{\uparrow^1})\otimes\textcolor{blue} {1}
       +(\textcolor{red}{1}\ltimes\textcolor{blue} {1}^{\uparrow^1})\otimes\textcolor{blue} {21}
       +(\textcolor{red}{1}\ltimes\textcolor{blue} {31}|\textcolor{blue} {2}^{\uparrow^1})\otimes\epsilon\\
    =&\ (\epsilon\otimes\textcolor{red} {1}+\textcolor{red} {1}\otimes\epsilon)\ltimes(\epsilon\otimes\textcolor{blue} {31}|\textcolor{blue} {2}+\textcolor{blue} {21}\otimes\textcolor{blue} {1}+\textcolor{blue} {1}\otimes\textcolor{blue} {21}+\textcolor{blue} {31}|\textcolor{blue} {2}\otimes\epsilon)^{\uparrow\otimes\uparrow}   \\
    =&\ \bigtriangleup(\textcolor{red} {1})\ltimes\bigtriangleup(\textcolor{blue} {312})^{\uparrow\otimes\uparrow}.
\end{align*}
\end{example}

%%%%%%%%-6-%%%%%
\section[Relation to the classical Hopf algebra on permutations]{Relation to the classical Hopf algebra on permutations}
%%%%%%%-6.1-%%%%%%%%
\subsection{Classical Hopf algebra on permutations}
In 1995, Malvenuto and Reutenauer~\cite{r18} constructed the classical Hopf algebra structure on permutations, with its multiplication defined by the (shifted) shuffle product. 
This graded, connected Hopf algebra is self-dual; in particular, its coproduct is not cocommutative. Therefore, it cannot be isomorphic to our $\mathbb{KS}$ since we have a coproduct that is cocommutative.

In 2005, Aguiar and Sottile~\cite{r5} showed that the dual of the graded Hopf algebra associated with the coradical filtration of the Malvenuto--Reutenauer Hopf algebra is a cocommutative Hopf algebra on permutations that is isomorphic to the Grossman--Larson~\cite{GL89} Hopf algebra of heap-ordered trees.
This Hopf algebra is free, graded, connected, and cocommutative. In characteristic zero, a result of Aliniaeifard and Thiem~\cite{AT22} shows that any two free, graded, connected, cocommutative Hopf algebras $A=\bigoplus_{d\ge 0} A_d$ and $B=\bigoplus_{d\ge 0} B_d$ are isomorphic if and only if $\dim A_d=\dim B_d$ for all $d\ge 0$. 
This can be deduced using older classical results, but~\cite{AT22} offers a convenient single reference. For the rest of this section, we will assume that $\mathrm{Char}(\mathbb{K})=0$.

We now remark that our Hopf algebra $\mathbb{KS}$ is graded, connected, and cocommutative. In the next subsection, we show that it is free; therefore,~\cite{AT22} implies that it is isomorphic to the cocommutative Hopf algebra of~\cite{r5} associated with the Malvenuto--Reutenauer Hopf algebra, and also isomorphic to the Grossman--Larson~\cite{GL89} Hopf algebra of heap-ordered trees. Finding an explicit isomorphism is not trivial. One needs to find an explicit change of basis that sends the product of the cocommutative Hopf algebra associated with the Malvenuto--Reutenauer Hopf algebra to our coupling product. This is, for the moment, left open, and we plan to explore this question further in the future.

%%%%%%%%-6.2-%%%%%%%%%
\subsection{The Hopf algebra $\mathbb{KS}$ is free} 
In this section, we show that $\mathbb{KS}$ is freely generated by indecomposable permutations. This will be achieved with a leading-term and triangularity argument. 

Given a permutation $a\in S_n$, its \emph{atom degree} is ${\rm ad}(a)=r$, where $r$ is the number of atoms in the decomposition 
$a=\alpha_1|\alpha_2|\cdots|\alpha_r$. We start with a simple observation about the coupling product.

\begin{lemma}\label{lem:lead}
  Given $a=\alpha_1|\alpha_2|\cdots|\alpha_r$ and $b=\beta_1|\beta_2|\cdots|\beta_s$ of degrees $n$ and $m$, respectively, the permutation 
  $c=\alpha_1|\alpha_2|\cdots|\alpha_r|\beta_1^{\uparrow^n}|\beta_2^{\uparrow^n}|\cdots|\beta_s^{\uparrow^n}$ appears in the expansion of $a\ltimes b^{\uparrow^n}$ and has atom degree 
  ${\rm ad}(c)=r+s$. All other permutations $c'$ in the expansion of $a\ltimes b^{\uparrow^n}$ satisfy ${\rm ad}(c')<{\rm ad}(c)$. 
\end{lemma}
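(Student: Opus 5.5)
The plan is to read off both claims directly from Definition~\ref{lem:coupling}, using the fact (stated with the definition) that each $\ddot{\alpha}_i$ and each $\beta_{t_j}^{\uparrow^n}$ is an atom of the resulting permutation. The point to make precise is that the bars in each term really are the factorization of that term. So first I will verify this: in a term
\[
\ddot{\alpha}_1|\cdots|\ddot{\alpha}_r|\beta_{t_1}^{\uparrow^n}|\cdots|\beta_{t_z}^{\uparrow^n},
\]
the absolute ascents are exactly the block boundaries.

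\textbf{Step 1: block ends are absolute ascents.} Consider the last entry of $\ddot{\alpha}_i$. It is the last entry of $\alpha_i$, which is at most $n$. Because $\alpha_i$ ends at an absolute ascent of $a$, this entry is smaller than every entry of $\alpha_{i+1},\dots,\alpha_r$. It is also smaller than every shifted $\beta$-entry, since those all exceed $n$.

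For the $\beta$-blocks, the last entry of $\beta_{t_j}^{\uparrow^n}$ is smaller than all later entries of $b^{\uparrow^n}$ with larger atom index. Since $t_1<\cdots<t_z$, this covers every block to its right.

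\textbf{Step 2: no absolute ascent inside a block.} Inside a block $\beta_{f(i)}^{\uparrow^n}\alpha_i$, any position in the $\beta$-part is followed by entries of $\alpha_i$, which are smaller, so it is not an absolute ascent. A position strictly inside $\alpha_i$ is not an absolute ascent of $a$. The entries to its right within $\alpha_i$ therefore already contain a smaller value, so it is not an absolute ascent of the term either. The same reasoning applies to interior positions of an unmerged $\beta$-block.

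So every term has atom degree exactly $r+z$, where $z=s-|L|$.

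\textbf{Step 3: conclusion.} The term with $L=\emptyset$ is precisely $c$, with coefficient $1$, and ${\rm ad}(c)=r+s$. Any other term comes from $L\neq\emptyset$ and has atom degree $r+s-|L|<r+s$.

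The terms are distinct permutations (Remark~\ref{remark4.2}), so $c$ cannot coincide with another term. The main care needed is Step 2, namely checking that the merged block $\beta^{\uparrow^n}\alpha_i$ is genuinely irreducible as a segment. This follows because its rightmost part, $\alpha_i$, consists of entries smaller than everything in the prepended $\beta$-part.
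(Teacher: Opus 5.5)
Your proof is correct and follows the paper's argument: the $L=\emptyset$ term is $c$, and every term with $L\neq\emptyset$ has $r+s-|L|<r+s$ atoms. The only difference is that you explicitly verify, in Steps 1--2, that the bars in each term of Definition~\ref{lem:coupling} are exactly its absolute ascents, whereas the paper simply takes this from the definition's assertion that the $\ddot{\alpha}_i$ are atoms (your ``therefore'' in Step 2 implicitly uses that the last entry of $\alpha_i$ is smaller than every entry of the later atoms).
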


\begin{proof} 
From Definition~\ref{lem:coupling}, the permutation $c$ is obtained when $L=\emptyset$. For all other terms $c'$, we have $L\ne\emptyset$, and the number of atoms is strictly less than $r+s$.
\end{proof}

In the next theorem, we will need the notion of an indecomposable permutation. Given a permutation $a=a_1a_2\cdots a_n$ of degree $n$, a position $1\le h<n$ is a \emph{global ascent} of $a$ if for all $1\le i\le h$ and $h<j\le n$
we have $a_i<a_j$. We remark that a global ascent position is always an absolute ascent position, but the converse is not true in general. Hence, if ${\mathcal A}(a)=\{i_1,i_2,\ldots,i_{r-1}\}$ is the set of all absolute ascents of $a$ in increasing order,
then the set of all global ascents of $a$ is a subset ${\mathcal G}(a)=\{ i_{k_1}, i_{k_2}, \ldots i_{k_{\ell-1}}\}\subseteq {\mathcal A}(a)$. For every global ascent position, we assign the symbol $\ga$.
For $a=\alpha_1|\alpha_2|\cdots|\alpha_r$ we have
$$a=\underbrace{\alpha_1|\cdots|\alpha_{i_{k_1}}}_{\Gamma_1}\ga\underbrace{\alpha_{i_{k_1}+1}|\cdots|\alpha_{i_{k_2}}}_{\Gamma_2}\ga\cdots\ga \underbrace{\alpha_{i_{k_{\ell-1}+1}}|\cdots|\alpha_r}_{\Gamma_\ell}.$$
Hence $a=\Gamma_1\ga\Gamma_2\ga\cdots\ga\Gamma_\ell$ is called the \emph{decomposition} of $a$, and $\overline{\Gamma}_i:={\rm st}(\Gamma_i)$ is an \emph{indecomposable component} of $a$. If $a=\Gamma_1$ has a single component, then $a$ is an \emph{indecomposable permutation}.

\begin{lemma}\label{lem:irreducible}
    Let  $a=\Gamma_1\ga\Gamma_2\ga\cdots\ga\Gamma_\ell$ be a permutation of degree $n$. The indecomposable components  $\overline{\Gamma}_i={\rm st}(\Gamma_i)$  are indecomposable permutations of degree $n_i$ such that $n=n_1+n_2+\cdots+n_\ell$. Moreover, $\Gamma_i=\overline{\Gamma_i}^{\uparrow^{n_1+\cdots+n_{i-1}}}$ and we have
    $$a=\overline{\Gamma_1}\ga\overline{\Gamma_2}^{\uparrow^{n_1}}\ga\cdots\ga\overline{\Gamma_\ell}^{\uparrow^{n_1+\cdots+n_{\ell-1}}}.$$
\end{lemma}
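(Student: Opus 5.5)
The argument rests on one observation: if $h$ is a global ascent of a permutation $a$ of degree $n$, then the prefix $a_1\cdots a_h$ is a permutation of $[h]$. Indeed, each of $a_1,\dots,a_h$ is smaller than each of $a_{h+1},\dots,a_n$, so the prefix consists of the $h$ smallest values, namely $\{1,\dots,h\}$. Apply this to consecutive global ascent positions $0=g_0<g_1<\cdots<g_{\ell-1}<g_\ell=n$, where $g_i=n_1+\cdots+n_i$ and $n_i$ is the length of $\Gamma_i$. The prefix up to $g_i$ is $[g_i]$ and the prefix up to $g_{i-1}$ is $[g_{i-1}]$. Hence ${\rm Set}(\Gamma_i)=\{g_{i-1}+1,\dots,g_i\}$, an interval of size $n_i$. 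Then ${\rm st}_{{\rm Set}(\Gamma_i)}$ is just the shift down by $g_{i-1}$, which gives $\Gamma_i=\overline{\Gamma_i}^{\uparrow^{n_1+\cdots+n_{i-1}}}$. Concatenating these blocks yields the displayed formula for $a$, and $n=\sum n_i$ holds because the $\Gamma_i$ partition the positions.

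Next I check that each $\overline{\Gamma_i}$ is indecomposable. Suppose $\overline{\Gamma_i}$ had a global ascent at position $p$ with $1\le p<n_i$. Shifting up preserves order, so $\Gamma_i$ has every entry in its first $p$ positions smaller than every entry in its remaining positions. Consider the position $g_{i-1}+p$ in $a$. Entries of $a$ before $\Gamma_i$ lie in $[g_{i-1}]$, below everything in $\Gamma_i$ and after it. Entries after $\Gamma_i$ exceed $g_i$, above everything in $\Gamma_i$ and before it. Combining these facts, $g_{i-1}+p$ is a global ascent of $a$. It lies strictly between $g_{i-1}$ and $g_i$, which contradicts the fact that the $g_j$ list all global ascents.

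I should also confirm that the decomposition into blocks $\Gamma_i$ is well defined as unions of atoms. This holds because every global ascent is an absolute ascent: the entry $a_h$ is smaller than all later entries. So the cut points lie among the atom boundaries, as the paper's display assumes.

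The main point needing care is the second step, the transfer of a global ascent from $\Gamma_i$ to $a$. It requires using both the "before" and "after" interval facts together, and both come from the first step.
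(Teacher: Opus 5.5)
Your proof is correct and follows the paper's own route. In both, global ascents force ${\rm Set}(\Gamma_i)=\{n_1+\cdots+n_{i-1}+1,\ldots,n_1+\cdots+n_i\}$, so standardizing $\Gamma_i$ is just a downward shift. You go further than the paper in writing out why each $\overline{\Gamma_i}$ is indecomposable: a global ascent of $\overline{\Gamma_i}$ would lift to a global ascent of $a$ strictly inside the block, which the paper dismisses as straightforward.
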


\begin{proof} All statements are straightforward once we have shown $\Gamma_i=\overline{\Gamma_i}^{\uparrow^{n_1+\cdots+n_{i-1}}}$. For this, it suffices to note that the definition of global ascents implies that 
${\rm Set}(\Gamma_i)$ must consist of consecutive integers; in fact,
$${\rm Set}(\Gamma_i)=\{n_1+\cdots+n_{i-1}+1, n_1+\cdots+n_{i-1}+2, \ldots, n_1+\cdots+n_{i-1}+n_i\}.$$
We have ${\rm Set}(\overline{\Gamma}_i)=[n_i]$, and all relative orders are preserved between $\Gamma_i$ and $\overline{\Gamma}_i$; therefore, our claim follows.
\end{proof}

\begin{theorem} The graded algebra $\mathbb{KS}$ is freely $\ltimes$--generated by the indecomposable permutations (permutations consisting of a single indecomposable component).
\end{theorem}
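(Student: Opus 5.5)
We will use a leading-term and triangularity argument built on Lemma~\ref{lem:lead} and Lemma~\ref{lem:irreducible}. For a finite sequence $w=(g_1,\ldots,g_\ell)$ of indecomposable permutations of degrees $n_1,\ldots,n_\ell$, put
$$
g_w:=g_1\ltimes g_2^{\uparrow^{n_1}}\ltimes\cdots\ltimes g_\ell^{\uparrow^{n_1+\cdots+n_{\ell-1}}}.
$$
This is well defined by Theorem~\ref{thm:ass}. The goal is to show that the elements $g_w$, with $w$ ranging over all words in the indecomposables with $\sum n_i=n$, form a basis of $\mathbb{K}S_n$. This is exactly the statement that the free associative algebra on the indecomposables maps isomorphically onto $\mathbb{KS}$.

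\textbf{Step 1: bijection between words and permutations.} The map $w\mapsto c_w:=g_1\ga g_2^{\uparrow^{n_1}}\ga\cdots\ga g_\ell^{\uparrow^{n_1+\cdots+n_{\ell-1}}}$ (plain concatenation) is a bijection onto $S_n$. The inverse is the decomposition $a\mapsto(\overline{\Gamma}_1,\ldots,\overline{\Gamma}_\ell)$ of Lemma~\ref{lem:irreducible}. One point needs a check: the global ascents of $c_w$ are exactly the junctions between the blocks. A junction is a global ascent because every entry of a later block is larger than every entry of an earlier block. There is no extra global ascent inside a block $g_i^{\uparrow}$: such an ascent would be a global ascent of $g_i$ itself, contradicting indecomposability.

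\textbf{Step 2: leading term of $g_w$.} Iterating Lemma~\ref{lem:lead} by induction on $\ell$, we want
$$
g_w=c_w+(\text{permutations of atom degree }<{\rm ad}(c_w)),
$$
where ${\rm ad}(c_w)=\sum_i{\rm ad}(g_i)$. The atoms of $c_w$ are the atoms of the blocks concatenated, by Lemma~\ref{lem:stA} and the shift lemma. For the induction, write $g_w=g_{w'}\ltimes (g_\ell^{\uparrow})$, where $g_{w'}=c_{w'}+\text{lower}$. Apply Lemma~\ref{lem:lead} to each permutation appearing in $g_{w'}$. From $c_{w'}$ we get the term $c_w$ plus terms of smaller atom degree. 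From a term $c'$ with ${\rm ad}(c')<{\rm ad}(c_{w'})$, every resulting term has atom degree at most ${\rm ad}(c')+{\rm ad}(g_\ell)<{\rm ad}(c_w)$.

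\textbf{Step 3: triangularity.} Step 2 alone does not give a triangular matrix, since distinct permutations can share the same atom degree. The coefficient of $c_w$ in $g_w$ is $1$, but we also need $c_{w''}$ to be absent from $g_w$ whenever ${\rm ad}(c_{w''})={\rm ad}(c_w)$ and $w''\neq w$. Step 2 supplies this. Every permutation $c'\ne c_w$ occurring in $g_w$ has ${\rm ad}(c')<{\rm ad}(c_w)$, so the only permutation of top atom degree in $g_w$ is $c_w$ itself.

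Now order $S_n$ by atom degree and break ties arbitrarily. By Step 1 the $g_w$ are indexed by the same set $S_n$. The transition matrix from $\{g_w\}$ to $S_n$ is then unitriangular: the column of $g_w$ has a $1$ at $c_w$ and all other entries at strictly smaller atom degree. Hence $\{g_w\}$ is a basis of $\mathbb{K}S_n$ for every $n$, which gives both generation and freeness.

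\textbf{Main obstacle.} The delicate point is Step 3, where we must make sure that the partial order by atom degree suffices. It does, because the leading monomial $c_w$ is the unique term of maximal atom degree in $g_w$. The remaining care lies in the bookkeeping of Step 2: the product is taken with shifts by the total degree, and the coupling of sequences is needed so that Lemma~\ref{lem:lead} applies termwise.
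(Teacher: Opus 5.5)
Your proof is correct and follows the paper's argument: Lemma~\ref{lem:lead} and Lemma~\ref{lem:irreducible} show that each permutation is the unique term of maximal atom degree in the coupling product of its indecomposable components, so the transition matrix is unitriangular. You also spell out two points the paper leaves implicit (the bijection between words in indecomposables and permutations, with no extra global ascents inside a block, and the induction iterating Lemma~\ref{lem:lead}), which makes explicit that the matrix is square and unitriangular.
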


\begin{proof} 
Given $a=\Gamma_1\ga\Gamma_2\ga\cdots\ga\Gamma_r$, let $\overline{\Gamma}_i={\rm st}(\Gamma_i)$ and let $n_i$ be the degree of $\overline{\Gamma}_i$.
Note that Lemma~\ref{lem:irreducible} gives us $\Gamma_i=\overline{\Gamma}_i^{\uparrow^{n_1+\cdots+n_{i-1}}}$.
For any total order refining the atom degree, Lemma~\ref{lem:lead} implies that for any permutation $a=\Gamma_1\ga\Gamma_2\ga\cdots\ga\Gamma_r$,
the leading term (maximal atom degree) of 
    $$\overline{\Gamma}_1 \ltimes \overline{\Gamma}_2^{\uparrow^{n_1}} \ltimes \cdots \ltimes \overline{\Gamma}_r^{\uparrow^{n_1+\cdots+n_{r-1}}} =  
    {\Gamma}_1 \ltimes {\Gamma}_2 \ltimes \cdots \ltimes {\Gamma}_r$$
is $a$ itself. 

This shows that there is a triangularity relation between the basis of permutations and the set of coupling products of indecomposable permutations. 
This triangularity implies that the change of basis is unitriangular, hence invertible.
We thus obtain that the indecomposable permutations freely generate $\mathbb{KS}$.
\end{proof}

\begin{example}
All the degree-3 products of the indecomposables are
\begin{equation*}
\begin{array}{r  l}
1\ltimes 1^{\uparrow^1}\ltimes 1^{\uparrow^2}=&\underline{1\ga2\ga3}+1\ga32+21\ga3+31|2+321,\\
1\ltimes 21^{\uparrow^1}=&\underline{1\ga32}+321,\\
21\ltimes 1^{\uparrow^2}=&\underline{21\ga3}+321,\\
312=&\underline{31|2},\\ 
231=&\underline{231},\\
321=&\underline{321},
\end{array}
\end{equation*}
%Thus,
%\begin{equation*}
%\begin{array}{r c l}
%123=&1\ga2\ga3&=1\ltimes 1^{\uparrow^1}\ltimes 1^{\uparrow^2}-1\ltimes 21^{\uparrow^1}-21\ltimes 1^{\uparrow^2}-31|2+321,\\
%132=&1\ga32&=1\ltimes 21^{\uparrow^1}-321,\\
%213=&21\ga3&=21\ltimes 1^{\uparrow^2}-321,\\
%312=&31|2&\\ 231,\hspace{0.24cm}&&\\321.\hspace{0.22cm}&&
%\end{array}
%\end{equation*}
where we have underlined the leading permutation factorized into irreducibles. There is a triangularity relation between the basis $$\{123, 132, 213, 312, 231, 321\}$$ and the set of coupling products 
$$\{1\ltimes 1^{\uparrow^1}\ltimes 1^{\uparrow^2}, 1\ltimes 21^{\uparrow^1}, 21\ltimes 1^{\uparrow^2}, 312, 231,321\}$$
of indecomposable permutations $1, 21, 231, 312, 321$. 
\end{example}

\begin{co} For $\mathrm{Char}(\mathbb{K})=0$,
the Hopf algebra $\mathbb{KS}$ is isomorphic to the cocommutative Hopf algebra of Aguiar--Sottile~{\rm\cite{r5}} associated with the Malvenuto--Reutenauer Hopf algebra of permutations. Using~{\rm \cite{r5}}, it is also isomorphic to the 
Grossman--Larson~{\rm \cite{GL89}} Hopf algebra of heap-ordered trees, since the graded dimensions coincide.
\end{co}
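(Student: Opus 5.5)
The plan is to invoke the classification result of Aliniaeifard and Thiem~\cite{AT22} recalled in Section~6.1: in characteristic zero, two free, graded, connected, cocommutative Hopf algebras are isomorphic if and only if their graded dimensions agree. Thus it suffices to check that both $\mathbb{KS}$ and the Aguiar--Sottile Hopf algebra have these four properties, and that their degree-$d$ components have the same dimension for every $d$.

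For $\mathbb{KS}$, the properties are already established. It is graded and connected because $S_0=\{\epsilon\}$, and the preceding corollary shows it is a graded connected Hopf algebra. It is cocommutative because the draw coproduct is symmetric under $K\leftrightarrow [r]\setminus K$, as noted after Definition~\ref{def:copro}. It is free by the theorem just proved, with indecomposable permutations as generators.

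For the Aguiar--Sottile algebra, we take from~\cite{r5} that the dual of the graded Hopf algebra associated with the coradical filtration of the Malvenuto--Reutenauer algebra is graded, connected, cocommutative and free. This is exactly as recalled at the start of Section~6.1, and we cite it without reproving it.

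For the dimension count, the associated graded of a filtration has the same dimension in each degree as the original. Graded duals of finite-dimensional graded pieces also preserve dimension. Hence the degree-$d$ component of the Aguiar--Sottile algebra has dimension $|S_d|=d!$, which equals $\dim \mathbb{K}S_d$. Applying~\cite{AT22} then gives the first isomorphism.

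For the second claim, \cite{r5} proves that their cocommutative algebra is isomorphic to the Grossman--Larson Hopf algebra of heap-ordered trees. Composing this with the isomorphism above gives the result. Alternatively, one can apply~\cite{AT22} directly, since heap-ordered trees on $d+1$ nodes are also counted by $d!$.

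The only step needing care is making sure the Aguiar--Sottile algebra satisfies the hypotheses exactly as~\cite{AT22} requires. In particular, its freeness must refer to the algebra structure, and it must be connected. Since these are results of~\cite{r5}, the argument reduces to citing them correctly. No explicit isomorphism is produced, in keeping with the open question stated in Section~6.1.
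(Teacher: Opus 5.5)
Your proposal is correct and is essentially the paper's own argument. $\mathbb{KS}$ is graded, connected, cocommutative and (by the freeness theorem) free; the Aguiar--Sottile algebra has the same four properties with degree-$d$ dimension $d!$; so the Aliniaeifard--Thiem classification in characteristic zero gives the isomorphism, and the heap-ordered tree statement follows from the isomorphism in Aguiar--Sottile (your check that passing to the associated graded and the graded dual preserves the dimensions $d!$ just spells out the paper's remark that the graded dimensions coincide).
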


%%%%%%%%%%%%%%%%%%-7-%%%%%%%%%%%%%%%%%
\section{Monomial basis}
In light of the universal theorem of combinatorial Hopf algebras (see~\cite{ABS06}), it is interesting and worthwhile to study various bases of combinatorial Hopf algebras. 
Very often, one starts with a special basis, such as the monomial basis for $\rm{Sym}$, the Hopf algebra of symmetric functions.
In that case, $\rm{Sym}$ has a basis $\{m_\lambda\}_{\lambda}$ indexed by integer partitions $\lambda=(\lambda_1,\lambda_2,\ldots,\lambda_r)$, where $\lambda_1\ge\lambda_2\ge\cdots\ge \lambda_r>0$ and the $\lambda_i$ are positive integers. 
Let $c_\lambda(k)$ be the multiplicity of $k$ in $\lambda$, let $c_\lambda=\prod_{k\geq1}c_\lambda(k)!$ and let $\widetilde m_\lambda=c_\lambda m_\lambda$, a scalar of the monomial basis.
$\rm{Sym}$ is a commutative algebra, and the multiplication and comultiplication of the $\widetilde m_\lambda$ are very similar to ours. Let $\lambda=(\lambda_1,\lambda_2,\ldots,\lambda_r)$ and $\mu=(\mu_1,\mu_2,\ldots,\mu_s)$ be two integer partitions. Then
$$\widetilde m_\lambda \widetilde m_\mu = \sum_{\substack{L\subseteq[r]\\ f\colon L\hookrightarrow [s]}} \widetilde m_{{\rm Sort}(\ddot\lambda_1,\ldots,\ddot\lambda_r,\mu_{[s]\setminus f(L)})},$$
where
$$
\ddot\lambda_i=\begin{cases}
\lambda_i+\mu_{f(i)} &\text{ if } i\in L,\\
\lambda_i &\text{ if } i\notin L
\end{cases}
$$
and ${\rm Sort}$ sorts the entries in decreasing order. One should compare the above with Definition~\ref{lem:coupling} to see the analogy clearly. 
Similarly, 
$$\Delta(\widetilde m_\lambda)= \sum_{K \subseteq[r]} \widetilde m_{\lambda_K} \otimes  \widetilde m_{\lambda_{[r] \backslash K}},$$
which should be compared with Definition~\ref{def:copro}.
To our knowledge, our $\mathbb{KS}$ is new, and we can now consider the basis of permutations in our presentation as an analogue of the monomial basis for $\mathbb{KS}$.

\begin{Que} The following questions are open problems that would be interesting to work on in the future.
\begin{enumerate}
\item For $\mathrm{Char}(\mathbb{K})=0$, find an explicit isomorphism between $\mathbb{KS}$ and the cocommutative Hopf algebra on permutations defined in~\cite{r5}.
\item In positive characteristic, is there an isomorphism as in (1)?
\item Study other bases of $\mathbb{KS}$. Is there an analogue of Schur functions in this Hopf algebra?
\item Study the characters of $\mathbb{KS}$ and the related combinatorial invariants as defined in~\cite{ABS06}. Note that each character of $\mathbb{KS}$ defines
a graded Hopf morphism to $\rm{Sym}$ (since $\mathbb{KS}$ is cocommutative). It would be interesting to study such morphisms.
\end{enumerate}
\end{Que}

\end{document}